\newtheorem{remark}[theorem]{Remark}
\DeclareMathOperator{\erfc}{erfc}
\DeclareMathOperator{\sech}{sech}
\DeclareMathOperator{\cosech}{cosech}
\begin{document}

\title{Dirichlet-Neumann and Neumann-Neumann Waveform Relaxation
  Algorithms for Parabolic Problems}

\author{Martin J. Gander\footnote[1]{Department of Mathematics,
    University of Geneva, Geneva, Switzerland.}\and
    Felix Kwok$^*$
    \and Bankim C. Mandal$^*$
}

\maketitle

\begin{abstract}
  We present and analyze a waveform relaxation version of the
  Dirichlet-Neumann and Neumann-Neumann methods for parabolic
  problems. Like the Dirichlet-Neumann method for steady problems, the
  method is based on a non-overlapping spatial domain decomposition,
  and the iteration involves subdomain solves with Dirichlet boundary
  conditions followed by subdomain solves with Neumann boundary
  conditions. For the Neumann-Neumann method, one step of the method
  consists of solving the subdomain problems using Dirichlet interface
  conditions, followed by a correction step involving Neumann
  interface conditions. However, each subdomain problem is now in
  space and time, and the interface conditions are also
  time-dependent.  Using Laplace transforms, we show for the heat
  equation that when we consider finite time intervals, the
  Dirichlet-Neumann and Neumann-Neumann methods converge superlinearly
  for an optimal choice of the relaxation parameter, similar to the
  case of Schwarz waveform relaxation algorithms. The convergence rate
  depends on the size of the subdomains as well as the length of the
  time window. For any other choice of the relaxation parameter,
  convergence is only linear. We illustrate our results with numerical
  experiments.
\end{abstract}

\section{Introduction}

We introduce and analyze new types of Waveform Relaxation (WR) methods
based on the Dirichlet-Neumann and Neumann-Neumann algorithms for
steady problems. To solve time-dependent problems in parallel, one can
either discretize in time to obtain a sequence of steady problems,
which is called Rothe's method after the German analyst Erich Rothe, and
then one applies domain decomposition algorithms to solve the steady
problems at each time step in parallel. Or one can first discretize in
space, which is called the method of lines, and then apply WR to the
large system of ordinary differential equations (ODEs) obtained from
the spatial discretization. WR methods have their origin in the work
of Picard \cite{Picard} and Lindel\"of \cite{Lind} for the existence proof of
solutions of ODEs in the late 19th century. Lelarasmee, Ruehli and
Sangiovanni-Vincentelli \cite{LelRue} were the first to introduce WR as a
parallel method for the solution of ODEs. For WR methods applied to
ODEs, we have two classical convergence results: (i) linear
convergence on unbounded time intervals under some dissipation
assumptions on the splitting (\cite{Nevan1}, \cite{Nevan2}, \cite{JelPo} and \cite{MiekNev}); and (ii)
superlinear convergence for nonlinear systems (including linear ones)
on bounded time intervals, assuming a Lipschitz condition on the
splitting function (\cite{Nevan1}, \cite{Nevan2}, \cite{BelZen} and \cite{Bjor}). The main
computational advantage of the WR method next to parallelization is
that one can use different time discretizations for different
components of the system.

Domain decomposition methods for elliptic PDEs can be extended to time
dependent problems by using the same decomposition in space, but then
solving time dependent problems in the subdomains during the iterative
solution process. This leads to WR type methods, see the early
references \cite{Bjor} and \cite{JelPo}. The systematic extension of the
classical Schwarz method to time dependent problems was started
independently in \cite{GanStu,GilKel}; Gander and Stuart \cite{GanStu} showed linear
convergence of overlapping Schwarz WR iteration for the heat equation
on unbounded time intervals with a rate depending on the size of the
overlap; Giladi and Keller \cite{GilKel} proved superlinear convergence of
the Schwarz WR method with overlap on bounded time intervals for the
convection-diffusion equation. Like WR algorithms in general, the so
called Schwarz Waveform Relaxation algorithms (SWR) converge
relatively slowly, except if the time window size is very short. A
remedy is to use optimized transmission conditions, which leads to
much faster algorithms, see \cite{GH1,BGH} for parabolic problems, and
\cite{GHN,GH2} for hyperbolic problems, and this technique also led to
the so called optimized Schwarz methods for elliptic problems, for an
overview, see \cite{Gand}.

The Dirichlet-Neumann and Neumann-Neumann methods belong to the class
of substructuring methods for solving elliptic PDEs.  The
Dirichlet-Neumann algorithm was first considered by Bj{\o}rstad \&
Widlund \cite{BjWid} and further studied in \cite{BramPas},
\cite{MarQuar01} and \cite{MarQuar02}; the Neumann-Neumann algorithm
was introduced by Bourgat et al. \cite{BouGT}, see also \cite{RoTal}
and \cite{TalRoV}. The performance of these algorithms for elliptic
problems is now well understood, see for example the book
\cite{TosWid}.  However, no substructuring-type analogue of the WR
method has been proposed so far. In this paper, we propose the
Dirichlet-Neumann Waveform Relaxation (DNWR) and the Neumann-Neumann
Waveform Relaxation (NNWR) methods, which generalize the use of
substructuring methods to the case of time-dependent problems in a
natural way. We define and analyze these methods in the continuous
setting to ensure the understanding of the asymptotic behavior of the
methods in the case of fine grids.

We formulate the new algorithms for the following parabolic equation
on a bounded domain $\Omega\subset\mathbb{R}^{d}$,
$0<t<T$, $d=1,2,3$:
\begin{equation}\label{modelproblem}
\begin{array}{rcll}
  \displaystyle\frac{\partial u}{\partial t}&=&\nabla\cdot\left(\kappa(\boldsymbol{x},t)\nabla u\right)+f(\boldsymbol{x},t),&  
   \boldsymbol{x}\in\Omega,\ 0<t<T, \\
  u(\boldsymbol{x},0)&=&u_{0}(\boldsymbol{x}), & \boldsymbol{x}\in\Omega,\\
  u(\boldsymbol{x},t)&=&g(\boldsymbol{x},t), & \boldsymbol{x}\in\partial\Omega,\ 0<t<T,
\end{array}
\end{equation} 
where $\kappa(\boldsymbol{x},t)\geq \kappa >0$.  In Section
\ref{Section2}, we introduce the non-overlapping DNWR algorithm with
two subdomains for the model problem (\ref{modelproblem}), and we
present sharp convergence estimates for DNWR obtained for the special
case of the one dimensional heat equation,
$\kappa(\boldsymbol{x},t)=1$. In Section \ref{Section3} we present the
NNWR algorithm for multiple subdomains for the general problem
(\ref{modelproblem}), and present sharp convergence estimates again
for the one dimensional heat equation.  Our convergence analysis shows
that both the DNWR and NNWR algorithms converge superlinearly on
finite time intervals, $T<\infty$.  It is based on detailed, technical
kernel estimates, which we show in Section \ref{Section4}. Section
\ref{Section5} contains the proofs of our main convergence results for
both DNWR and NNWR.  We then show in Section \ref{Section6} how the
analysis of the NNWR can be generalized to higher spatial dimensions,
and prove that the convergence estimates do not change.  We finally
show numerical results in Section \ref{Section7}, which illustrate our
analysis. We also test the algorithms in configurations not covered by
our analysis, and still observe the same convergence behavior. 

\section{The Dirichlet-Neumann Waveform Relaxation algorithm}\label{Section2}

To define the Dirichlet-Neumann WR algorithm for the model problem
(\ref{modelproblem}) on the space-time domain $\Omega\times(0,T)$ with
Dirichlet data given on $\partial\Omega$, we assume that the spatial
domain $\Omega$ is partitioned into two non-overlapping subdomains
$\Omega_{1}$ and $\Omega_{2}$, as illustrated in Figure
\ref{FigDecomp}. 
\begin{figure}
  \centering
  \includegraphics[width=0.5\textwidth]{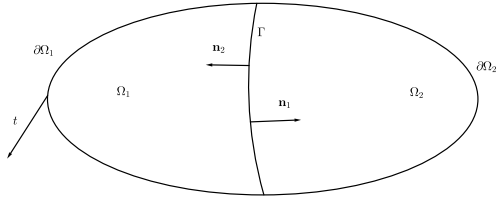}
  \vspace{-2.5em}
  \caption{Splitting into two non-overlapping subdomains}
  \label{FigDecomp}
\end{figure}
We denote by $u_{i}$ the restriction of the solution
$u$ of (\ref{modelproblem}) to $\Omega_{i}$, $i=1,2$, and by
$\boldsymbol{n}_{i}$ the unit outward normal for $\Omega_{i}$ on the
interface $\Gamma:=\partial\Omega_{1}\cap\partial\Omega_{2}$.
The Dirichlet-Neumann Waveform Relaxation algorithm consists of the
following steps: given an initial guess
$h^{0}(\boldsymbol{x},t)$ along the interface $\Gamma\times
(0,T)$, compute for $k=1,2,\ldots$ with 
$u_{1}^{k}=g$ on $\partial\Omega_{1}\setminus\Gamma$ and
$u_{2}^{k}=g$ on $\partial\Omega_{2}\setminus\Gamma$
the approximations
\begin{equation}\label{DNWR}
\arraycolsep0.08em
\begin{array}{rcll}
\partial_{t}u_{1}^{k}-\nabla\cdot\left(\kappa(\boldsymbol{x},t)\nabla u_{1}^{k}\right)&=&f, & \textrm{in}\; \Omega_{1},\\
u_{1}^{k}(\boldsymbol{x},0)&=&u_{0}(\boldsymbol{x}), & \textrm{in}\; \Omega_{1},\\
u_{1}^{k}&=&h^{k-1}, & \textrm{on}\; \Gamma,
\end{array}\ 
\begin{array}{rcll}
\partial_{t}u_{2}^{k}-\nabla\cdot\left(\kappa(\boldsymbol{x},t)\nabla u_{2}^{k}\right)&=&f, & \textrm{in}\; \Omega_{2},\\
u_{2}^{k}(\boldsymbol{x},0)&=&u_{0}(\boldsymbol{x}), & \textrm{in}\; \Omega_{2},\\
\partial_{\boldsymbol{n}_{2}} u_{2}^{k} & = & -\partial_{\boldsymbol{n}_{1}} u_{1}^{k}, & \textrm{on}\; \Gamma,
\end{array}
\end{equation}
and then update the value along the interface using
\begin{equation}\label{DNWR2}
  h^{k}(\boldsymbol{x},t)=\theta u_{2}^{k}\left|_{\Gamma\times(0,T)}\right.+(1-\theta)h^{k-1}(\boldsymbol{x},t),
\end{equation}
$\theta\in(0,1]$ being a relaxation parameter. Now the main goal of the
analysis is to study how the error
$h^{k-1}(\boldsymbol{x},t)-u\left|_{\Gamma\times(0,T)}\right.$
converges to zero, and by linearity it suffices to consider the so called
error equations, $f(\boldsymbol{x},t)=0$, $g(\boldsymbol{x},t)=0$,
$u_{0}(\boldsymbol{x})=0$ in (\ref{DNWR}), and examine how
$h^{k-1}(\boldsymbol{x},t)$ converges to zero as $k\rightarrow\infty$.

We present now sharp convergence estimates for algorithm
(\ref{DNWR},\ref{DNWR2}) for the special case of the heat equation,
$\kappa(\boldsymbol{x},t)=1$, on the one dimensional domain
$\Omega=(-a,b)$ with subdomains $\Omega_{1}=(-a,0)$ and
$\Omega_{2}=(0,b)$. Our convergence analysis is based on Laplace
transforms. We define the Laplace transform of a function $u(x,t)$
with respect to time $t$ as
\begin{equation}\label{LaplaceTransform}
  \hat{u}(x,s):=\mathcal{L}\left\{ u(x,t)\right\} 
    :=\int_{0}^{\infty}e^{-st}u(x,t)\, dt,
\end{equation}
where $s$ is a complex variable. If $\mathcal{L}\left\{ u(x,t)\right\}
=\hat{u}(x,s)$, then the inverse Laplace transform of $\hat{u}(x,s)$
is defined by
\begin{equation}\label{InverseLaplaceTransform}
  \mathcal{L}^{-1}\left\{ \hat{u}(x,s)\right\} :=u(x,t),\qquad t\geq0,
\end{equation}
and maps the Laplace transform of a function back to the original
function. For more information on Laplace transforms, see
\cite{Church,Oberhett}.

After a Laplace transform, the DNWR algorithm (\ref{DNWR},\ref{DNWR2})
for the error equations in the one dimensional heat equation setting
becomes
\begin{equation}\label{DNWRL}
\arraycolsep0.2em
\begin{array}{rcll}
  (s-\partial_{xx})\hat{u}_{1}^{k}&=&0 & \textrm{on $(-a,0)$},\\
  \hat{u}_{1}^{k}(-a,s)&=&0,\\
  \hat{u}_{1}^{k}(0,s)&=&\hat{h}^{k-1}(s),
\end{array}\quad
\begin{array}{rcll}
  (s-\partial_{xx})\hat{u}_{2}^{k}&=&0 & \textrm{on $(0,b)$},\\
  \partial_{x}\hat{u}_{2}^{k}(0,s)&=&\partial_{x}\hat{u}_{1}^{k}(0,s),\\
  \hat{u}_{2}^{k}(b,s)&=&0,
\end{array}
\end{equation}
followed by the updating step
\begin{equation}\label{DNWRL2}
  \hat{h}^{k}(s)=\theta\hat{u}_{2}^{k}(0,s)+(1-\theta)\hat{h}^{k-1}(s).
\end{equation}
Solving the two-point boundary value problems in the Dirichlet and
Neumann step in
(\ref{DNWRL}), we get
\begin{eqnarray}
  \hat{u}_{1}^{k}(x,s)&=&\frac{\hat{h}^{k-1}(s)}{\sinh(a\sqrt{s})}\sinh\left(
  (x+a)\sqrt{s}\right), \\
 \hat{u}_{2}^{k}(x,s)&=&\hat{h}^{k-1}(s)\frac{\coth(a\sqrt{s})}
   {\cosh(b\sqrt{s})}\sinh((x-b)\sqrt{s}).
\end{eqnarray}
By induction, we therefore find for the updating step the relation
\begin{equation}\label{DNWRLup}
  \hat{h}^{k}(s)=\left(
  1-\theta-\theta\tanh(b\sqrt{s})\coth(a\sqrt{s})\right)^{k}
  \hat{h}^{0}(s),\quad k=1,2,3,\ldots
\end{equation}

\begin{theorem}[Convergence of DNWR for $a=b$]
  When the subdomains are of the same size, $a=b$ in
  (\ref{DNWRL},\ref{DNWRL2}), the DNWR algorithm converges linearly for
  $0<\theta<1$, $\theta\neq1/2$. For $\theta=1/2$, it converges in two
  iterations. Convergence is independent of the time window size $T$.
\end{theorem}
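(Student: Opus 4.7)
The plan is to exploit the explicit product formula (\ref{DNWRLup}) for the interface error in Laplace space, specialized to $a=b$, and then transfer the result back to the time domain by linearity.

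First, I would substitute $a=b$ into the convergence factor in (\ref{DNWRLup}). The crucial observation is that $\tanh(b\sqrt{s})\coth(a\sqrt{s})=1$ identically in $s$ whenever $a=b$, so the iteration collapses to
\begin{equation*}
  \hat{h}^{k}(s) \;=\; (1-2\theta)^{k}\,\hat{h}^{0}(s),\qquad k=1,2,\ldots
\end{equation*}
Note that the cancellation is purely algebraic, so there is no dependence on $s$ and hence none on the time horizon $T$. This is the source of the $T$-independence claimed in the theorem.

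Next I would treat the two cases separately. For $\theta=1/2$, the factor $(1-2\theta)^{k}$ vanishes already at $k=1$, so $\hat{h}^{1}(s)\equiv 0$ and therefore $h^{1}(x,t)\equiv 0$ by injectivity of the Laplace transform. Feeding this zero interface trace into the subdomain solves (\ref{DNWRL}) at iteration $k=2$ immediately yields $\hat{u}_{1}^{2}\equiv 0$ and $\hat{u}_{2}^{2}\equiv 0$, which in the time domain means that the error on both subdomains is annihilated after the second set of subdomain solves; this is the promised two-iteration convergence. For $\theta\in(0,1)\setminus\{1/2\}$ we have $|1-2\theta|<1$, and inverting the Laplace transform gives the pointwise-in-time identity $h^{k}(t) = (1-2\theta)^{k} h^{0}(t)$, whence
\begin{equation*}
  \|h^{k}\|_{L^{\infty}(0,T)} \;\leq\; |1-2\theta|^{k}\,\|h^{0}\|_{L^{\infty}(0,T)},
\end{equation*}
which is linear convergence with a rate independent of $T$.

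There is no real obstacle here: once one notices the $\tanh\cdot\coth=1$ identity at $a=b$, the proof reduces to an elementary geometric-series argument in Laplace space. The only point that needs a small amount of care is justifying that the inversion of the Laplace transform commutes with the scalar multiplication by $(1-2\theta)^{k}$, which follows immediately from linearity since the factor does not depend on $s$. This contrasts sharply with the case $a\neq b$, where the convergence factor is a genuinely $s$-dependent symbol and the analysis will require the kernel estimates developed in Section \ref{Section4}.
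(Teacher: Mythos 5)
Your proposal is correct and follows exactly the paper's own argument: specialize \eqref{DNWRLup} to $a=b$ so that $\tanh(b\sqrt{s})\coth(a\sqrt{s})=1$, back-transform the $s$-independent factor $(1-2\theta)^k$ to get $h^k(t)=(1-2\theta)^k h^0(t)$, and read off linear convergence for $\theta\neq 1/2$ and two-iteration convergence for $\theta=1/2$. No gaps.
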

\begin{proof}
For $a=b$, equation (\ref{DNWRLup}) reduces to
$\hat{h}^{k}(s)=(1-2\theta)^{k}\hat{h}^{0}(s),$ which has the simple
back transform $h^{k}(t)=(1-2\theta)^{k}h^{0}(t)$.  Thus the
convergence is linear for $0<\theta<1$, $\theta\neq1/2$. If $\theta=1/2$,
we have $h^{1}(t)=0$, and hence one more iteration produces the desired
solution on the entire domain.
\end{proof}

Having treated the simple case where the subdomains are of the same
size, $a=b$, we focus now on the more interesting case where $a\neq
b$. Defining
\begin{equation}\label{Gdef}
  G(s):=\tanh(b\sqrt{s})\coth(a\sqrt{s})-1
    =\frac{\sinh((b-a)\sqrt{s})}{\sinh(a\sqrt{s})\cosh(b\sqrt{s})},
\end{equation}
the recurrence relation (\ref{DNWRLup}) can be rewritten as
\begin{equation}\label{hrecurrenceLaplace}
\hat{h}^{k}(s)=\left\{\begin{array}{ll}
\left( q(\theta)-\theta G(s)\right)^{k}\hat{h}^{0}(s), & \theta\neq1/2,\\
\left(-1\right)^{k}2^{-k}G^{k}(s)\hat{h}^{0}(s), & \theta=1/2,
\end{array}\right.
\end{equation}
where $q(\theta)=1-2\theta$. Note that for $\textrm{Re}(s)>0,$ $G(s)$
is $\mathcal{O}(s^{-p})$ for every positive $p$, which can be seen as
follows: setting $s=re^{i\vartheta}$, we obtain for $a \geq b$
the bound
$$
  \left|s^{p}G(s)\right|\leq\left|\frac{s^{p}}{\cosh(b\sqrt{s})}\right|
  \leq\frac{2r^{p}}{\left|e^{b\sqrt{r/2}}-e^{-b\sqrt{r/2}}\right|}\rightarrow0
  \quad\mbox{as $r\rightarrow\infty$},
$$
and for $a<b$, we get the bound
$$
  \left|s^{p}G(s)\right|\leq\left|\frac{s^{p}}{\sinh(a\sqrt{s})}
  \right|\leq\frac{2r^{p}}{\left|e^{a\sqrt{r/2}}-e^{-a\sqrt{r/2}}\right|}\rightarrow0
  \quad\mbox{as $r\rightarrow\infty$}.
$$
Therefore, by \cite[p.~178]{Church}, $G(s)$ is the Laplace transform of an
infinitely differentiable function $F_{1}(t)$, which is the reason why we introduced
$G(s)$ in (\ref{Gdef}). We now define $F_{k}(t):=\mathcal{L}^{-1}\left\{
G^{k}(s)\right\}$ for $k=1,2,3,\ldots$. In what follows, we study the
special case $\theta=1/2$, when $h^{k}$ is given by a convolution of
$h^{0}$ with the analytic function $F_k$. For $\theta$ not equal to
$1/2$, different techniques are required to analyze the behavior of
the DNWR algorithm, and this will be done in a future paper. We also
have to consider two cases: $a>b$, which means that the Dirchlet
subdomain is bigger than Neumann subdomain, and $a<b$, when the
Neumann subdomain is bigger than the Dirichlet subdomain. We have
the following two convergence results, whose proofs will be given in 
Section \ref{Section5}.
\begin{theorem}[Convergence of DNWR for $a>b$]\label{Theorem2} 
  If $\theta=1/2$ and the Dirichlet subdomain is larger than the
  Neumann subdomain, then the error of the DNWR algorithm
  (\ref{DNWRL},\ref{DNWRL2}) satisfies for $t\in(0,\infty)$ the linear
  convergence estimate
  \begin{equation}
    \| h^{k}\|_{L^{\infty}(0,\infty)}\leq
    \left(\frac{a-b}{2a}\right)^{k}\| h^{0}\|_{L^{\infty}(0,\infty)}.
  \end{equation}
  On a finite time interval $t\in(0,T)$, the DNWR method converges
  superlinearly with the estimate
  \begin{equation}
    \| h^{k}\|_{L^{\infty}(0,T)}\leq
    \left(\frac{a-b}{a}\right)^{k}\erfc
    \left(\frac{kb}{2\sqrt{T}}\right)\| h^{0}\|_{L^{\infty}(0,T)}.
  \end{equation}
\end{theorem}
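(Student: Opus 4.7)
The plan is to use the Laplace-domain recurrence (\ref{hrecurrenceLaplace}) to write $h^{k}$ as a time-domain convolution against the kernel $F_{k}$, reduce the two $L^{\infty}$ estimates to $L^{1}$ estimates on $F_{k}$, and then invoke the kernel estimates of Section~\ref{Section4}. Concretely, for $\theta=1/2$ the recurrence reads $\hat{h}^{k}(s)=(-1/2)^{k}G^{k}(s)\hat{h}^{0}(s)$, so the convolution theorem together with $F_{k}=\mathcal{L}^{-1}\{G^{k}\}$ gives
\begin{equation*}
  h^{k}(t)\;=\;\frac{(-1)^{k}}{2^{k}}\int_{0}^{t}F_{k}(t-\tau)\,h^{0}(\tau)\,d\tau,
\end{equation*}
and the standard convolution inequality yields
\begin{equation*}
  \|h^{k}\|_{L^{\infty}(0,T)}\;\leq\;2^{-k}\,\|F_{k}\|_{L^{1}(0,T)}\,\|h^{0}\|_{L^{\infty}(0,T)},
\end{equation*}
together with its analogue on $(0,\infty)$. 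Both assertions therefore reduce to bounds on $\|F_{k}\|_{L^{1}}$.

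For the infinite-time estimate I would use that $G(0)=(b-a)/a$, which is read off directly from the expansion $\sinh(c\sqrt{s})\sim c\sqrt{s}$ as $s\to 0^{+}$ in (\ref{Gdef}), together with the sign-definiteness of $F_{k}$ on $(0,\infty)$ that should form one of the kernel properties established in Section~\ref{Section4}. Sign-definiteness implies $\|F_{k}\|_{L^{1}(0,\infty)}=|\int_{0}^{\infty}F_{k}(t)\,dt|=|G(0)|^{k}=((a-b)/a)^{k}$, and combining this with the $2^{-k}$ prefactor produces the claimed linear rate $((a-b)/(2a))^{k}$. For the finite-time estimate I would first expand each hyperbolic factor in $G(s)$ as a geometric series in $e^{-2a\sqrt{s}}$ and $-e^{-2b\sqrt{s}}$, producing $G(s)$ as a signed series of exponentials $e^{-\alpha\sqrt{s}}$ whose smallest exponent is $\alpha=2b$. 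Taking the $k$-th power and inverting termwise with the identity $\mathcal{L}^{-1}\{e^{-\alpha\sqrt{s}}/s\}(t)=\erfc(\alpha/(2\sqrt{t}))$ then produces, after integrating $F_{k}$ against $1$ on $(0,T)$, a series dominated by the leading $\erfc(kb/(2\sqrt{T}))$ contribution corresponding to the smallest attainable exponent $2kb$ in $G^{k}$; once summed with the right combinatorial factors this yields $\|F_{k}\|_{L^{1}(0,T)}\leq (2(a-b)/a)^{k}\erfc(kb/(2\sqrt{T}))$, which combines with $2^{-k}$ to give the stated superlinear bound.

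The main obstacle is exactly what Section~\ref{Section4} is designed to handle: the series representation of $F_{k}$ has terms of mixed sign, so a naive triangle inequality applied to $G^{k}(s)$ would blow up the constants and destroy the $\erfc$ decay. A careful grouping of the resulting triple sum is needed to respect the cancellations that make $F_{k}$ sign-definite and concentrate its mass near $t\asymp kb^{2}$; this concentration is precisely what produces the $\erfc(kb/(2\sqrt{T}))$ factor, since $\int_{0}^{T}$ of a mass concentrated that far out is vanishingly small when $T\ll k^{2}b^{2}$. Once the Section~\ref{Section4} lemmas supply these two bounds on $\|F_{k}\|_{L^{1}}$, the convolution reduction above delivers both conclusions of the theorem.
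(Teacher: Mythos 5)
Your reduction to $L^1$ bounds on $F_k$ via the convolution inequality is exactly the paper's starting point, and your argument for the infinite-time linear estimate is essentially the paper's: $(-1)^kF_k\geq 0$ (established in the paper by writing $-G$ as a product of two kernels that Lemma~\ref{PositivityLemma} shows have positive inverse transforms, then invoking positivity of convolutions), followed by Lemma~\ref{LimitLemma} to identify $\int_0^\infty|(-1)^kF_k|$ with $\lim_{s\to 0^+}(-1)^kG^k(s)=((a-b)/a)^k$. That half is sound.

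The superlinear estimate, however, has a genuine gap: you correctly identify that the exponential series for $G^k$ has terms of mixed sign (the alternating signs coming from $1/\cosh(b\sqrt{s})$), so that a termwise triangle inequality destroys the $\erfc$ decay, but the ``careful grouping of the resulting triple sum'' that is supposed to rescue the argument is precisely the missing content, and it is not clear it can be carried out in the form you describe. (Note also a symptom of this: the smallest exponent in $(-G)^k$ is $2kb$, which under $\mathcal{L}^{-1}\{e^{-\lambda\sqrt{s}}/s\}=\erfc(\lambda/(2\sqrt{t}))$ would produce $\erfc(kb/\sqrt{T})$, not the claimed $\erfc(kb/(2\sqrt{T}))$.) The paper sidesteps the signed triple sum entirely by factoring $G^k(s)=\frac{\sinh^k((a-b)\sqrt{s})}{\sinh^k(a\sqrt{s})}\cdot\sech^k(b\sqrt{s})$ and treating the two factors by different mechanisms: the first factor $g_k$ is a positive kernel whose total mass is bounded by $((a-b)/a)^k$ via the $s\to 0^+$ limit (this gives $\|v^k\|_{L^\infty(0,T)}\leq((a-b)/(2a))^k\|h^0\|_{L^\infty(0,T)}$ for $\hat v^k:=\cosh^k(b\sqrt{s})\hat h^k$), while the second factor $f_k=\mathcal{L}^{-1}(\sech^k(b\sqrt{s}))$ is shown to be dominated pointwise by $r_k=\mathcal{L}^{-1}(2^ke^{-kb\sqrt{s}})$, because $\mathcal{L}\{r_k-f_k\}=\sum_{j=1}^k\binom{k}{j}e^{-2jb\sqrt{s}}\sech^k(b\sqrt{s})$ is a sum of transforms of positive functions; hence $\int_0^Tf_k\leq\int_0^Tr_k=2^k\erfc(kb/(2\sqrt{T}))$. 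It is this pointwise domination by a single exponential, not a resummation of the signed series, that extracts the $\erfc$ factor while keeping the constant $((a-b)/a)^k$. Without supplying either this factorization or a worked-out alternative that controls the cancellations, your proof of the second estimate is incomplete.
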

\begin{theorem}[Convergence of DNWR for $a<b$]\label{Theorem3} 
  If $\theta=1/2$ and the Dirichlet subdomain is smaller than the Neumann
  subdomain, then the error of the DNWR algorithm
  (\ref{DNWRL},\ref{DNWRL2}) satisfies for $t\in(0,\infty)$ the linear
  convergence estimate
  \begin{equation} \label{eq2.14}
    \| h^{2k}\|_{L^{\infty}(0,\infty)}\leq
    \left(\frac{b-a}{2a}\right)^{2k}\| h^{0}\|_{L^{\infty}(0,\infty)}.
  \end{equation}
  For a finite time interval $t\in (0,T)$, the DNWR converges superlinearly
  with the estimate
  \begin{equation}\label{eq2.15}
    \| h^{2k}\|_{L^{\infty}(0,T)}\leq
    \left( \frac{\sqrt{2}}{1-e^{-\frac{(2k+1)a^2}{T}}}\right)^{2k}
    e^{-k^{2}a^2/T}\| h^{0}\|_{L^{\infty}(0,T)}.
  \end{equation}
\end{theorem}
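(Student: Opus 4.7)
Setting $\theta=1/2$ in the recurrence (\ref{hrecurrenceLaplace}) gives $\hat{h}^{k}(s)=(-1)^{k}2^{-k}G^{k}(s)\hat{h}^{0}(s)$, and since $G^{k}$ is the Laplace transform of $F_{k}=F_{1}^{*k}$, an inverse transform converts this to the convolution identity $h^{k}=(-1)^{k}2^{-k}(F_{k}*h^{0})$. The standard convolution inequality then yields
\begin{equation*}
\|h^{2k}\|_{L^{\infty}(0,T)}\leq 2^{-2k}\,\|F_{2k}\|_{L^{1}(0,T)}\,\|h^{0}\|_{L^{\infty}(0,T)},
\end{equation*}
and an identical statement with $T$ replaced by $\infty$. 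The entire theorem therefore reduces to sharp $L^{1}$ bounds on $F_{2k}$ over the two time intervals.

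For the linear estimate (\ref{eq2.14}) on $(0,\infty)$, the plan is to prove the nonnegativity of $F_{2}$ as a function of $t$. Once this sign property is in hand, $F_{2k}=F_{2}^{*k}$ is a convolution of nonnegative kernels and is therefore itself nonnegative, so that $\|F_{2k}\|_{L^{1}(0,\infty)}=\int_{0}^{\infty}F_{2k}(\tau)\,d\tau=G^{2k}(0)$. A short Taylor expansion in (\ref{Gdef}) near $s=0$ gives $G(0)=(b-a)/a$, and combining with the factor $2^{-2k}$ produces exactly the announced rate $((b-a)/(2a))^{2k}$. The sign property of $F_{2}$ is the nontrivial analytic input and is precisely the type of statement to be established in Section~\ref{Section4}. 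Note that the restriction to even iterations is essential here, since for $a<b$ one has $G(0)>0$ yet $F_{1}$ cannot be pointwise nonnegative (otherwise the analogue of Theorem~\ref{Theorem2} would hold for all $k$, not just for $2k$); so $F_{1}$ changes sign and the doubling is needed to restore positivity through $F_{2}=F_{1}*F_{1}$.

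For the superlinear estimate (\ref{eq2.15}) on $(0,T)$, the plan is to exploit the small-$t$ asymptotics of $F_{2k}$. Setting $p=e^{-2a\sqrt{s}}$ and $q=e^{-2b\sqrt{s}}$ rewrites (\ref{Gdef}) as $G(s)=2(p-q)/[(1-p)(1+q)]$. Raising to the $2k$-th power and expanding $1/(1-p)$ and $1/(1+q)$ as geometric series produces a double sum of terms $c\,e^{-\beta\sqrt{s}}$ with $\beta\geq 4ka$, each of which inverts explicitly via
\begin{equation*}
\mathcal{L}^{-1}\bigl\{e^{-\beta\sqrt{s}}\bigr\}(t)=\frac{\beta}{2\sqrt{\pi t^{3}}}\,e^{-\beta^{2}/(4t)}.
\end{equation*}
Integrating over $(0,T)$ converts each Gaussian into a complementary error function at $\beta/(2\sqrt{T})$. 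The leading term $\beta=4ka$ supplies the dominant Gaussian contribution, which after a coarse bound on $\erfc$ leaves the factor $e^{-k^{2}a^{2}/T}$ of (\ref{eq2.15}), while the remaining terms indexed by the geometric expansions contribute a remainder in the variable $e^{-2a^{2}/T}$ that is packaged into the prefactor $(\sqrt{2}/(1-e^{-(2k+1)a^{2}/T}))^{2k}$.

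The main obstacle is twofold. First, proving the sign property of $F_{2}$ is genuinely delicate, because without it the triangle inequality applied to the alternating series for $F_{2k}$ would cost an extra factor of order $2^{k}$ in the $L^{1}$ norm and would destroy the infinite-time rate. Second, the double geometric series obtained by expanding $G^{2k}$ must be controlled with enough precision to produce the specific prefactor and Gaussian rate of (\ref{eq2.15}); this is a careful piece of combinatorics on exponents of $\sqrt{s}$ combined with monotonicity estimates for $\erfc$. Both are analytic questions on heat-kernel-type expressions, and are exactly the technical kernel estimates that will be developed in Section~\ref{Section4} and then assembled into the full proof in Section~\ref{Section5}.
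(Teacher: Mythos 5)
Your reduction to $L^1$ bounds on the kernels is the right starting point, but both halves of your plan have real gaps. For the linear estimate \eqref{eq2.14}, everything rests on the pointwise nonnegativity of $F_2=\mathcal{L}^{-1}\{G^2\}$, which you defer to ``the kernel estimates of Section~\ref{Section4}''; however, none of the lemmas there delivers it. Lemma~\ref{PositivityLemma} only treats ratios $\sinh(\alpha\sqrt s)/\sinh(\beta\sqrt s)$ and $\cosh(\alpha\sqrt s)/\cosh(\beta\sqrt s)$ with $\alpha<\beta$, and Lemma~\ref{KernelLemma} only treats powers of $\cosech$. When $a<b<2a$ one can factor $G=\frac{\sinh((b-a)\sqrt s)}{\sinh(a\sqrt s)}\cdot\frac{1}{\cosh(b\sqrt s)}$ into two kernels that are positive by Lemma~\ref{PositivityLemma}, so in that regime $F_1\geq 0$ already (your aside that $F_1$ ``cannot be pointwise nonnegative'' is therefore wrong there); but when $b\geq 2a$ the numerator coefficient $b-a$ exceeds the denominator coefficient $a$ and no combination of the paper's positivity results applies to $F_2$. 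This is precisely the nontrivial input, and the paper itself does not prove \eqref{eq2.14} here but cites an external reference for it. As stated, your proof of the infinite-time bound is incomplete at its crux.

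For the superlinear estimate, your expansion $G=2(p-q)/((1-p)(1+q))$ with $p=e^{-2a\sqrt s}$, $q=e^{-2b\sqrt s}$ is correct, and the smallest exponent in $G^{2k}$ is indeed $4ka$. But if you carry out the term-by-term bound you propose (triangle inequality over the triple series, $\erfc(x)\leq e^{-x^2}$, then resumming), you obtain something of the form $2^{2k}\bigl(1-e^{-(4k+1)a^2/T}\bigr)^{-4k}e^{-4k^2a^2/T}$: a sharper Gaussian, but a prefactor containing $\sum_j\binom{2k}{j}=2^{2k}$ and \emph{two} negative-binomial resummations of order $2k$ each. This does not reduce to \eqref{eq2.15}; for $T$ large relative to $ka^2$ the factor $(1-e^{-(4k+1)a^2/T})^{-4k}$ grows like $T^{4k}$ versus $T^{2k}$ in the stated bound, so your estimate is strictly weaker there and cannot imply the theorem as written. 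The paper avoids this by \emph{not} expanding all of $G^{2k}$: it factors $G^{2}(s)=\cosech^{2}(a\sqrt s)\,\hat\phi(s)$ with $\hat\phi=\sinh^2((b-a)\sqrt s)/\cosh^2(b\sqrt s)$, shows $\int_0^T|\phi|\leq 2$ by writing $\hat\phi=\hat h_1-\hat h_2$ as a difference of two kernels positive by Lemma~\ref{PositivityLemma}, iterates this to get $\|\vartheta^{2k}\|_{L^\infty(0,T)}\leq 2^{-k}\|h^0\|_{L^\infty(0,T)}$ for $\hat\vartheta^{2k}=\sinh^{2k}(a\sqrt s)\hat h^{2k}$, and only then applies the binomial/$\erfc$ machinery of Lemma~\ref{KernelLemma} to $\cosech^{2k}(a\sqrt s)$, whose leading exponent $2ka$ is exactly what produces $e^{-k^2a^2/T}$ with the prefactor $(\sqrt2)^{2k}(1-e^{-(2k+1)a^2/T})^{-2k}$. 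You should either adopt that factorization or accept that your route proves a different (and for large $T$ weaker) superlinear bound.
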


\begin{remark}
The linear estimate \eqref{eq2.14} does not always imply convergence,
because $b-a$ can be larger than $2a$. In other words, when $b > 3a$,
i.e., when the Neumann subdomain is much larger than the Dirichlet
one, the error over the infinite time interval does not converge to
zero as $k \to \infty$. In this case, one should switch the interface
conditions and solve a Dirichlet problem on the larger subdomain.
\end{remark}

\section{The Neumann-Neumann Waveform Relaxation algorithm}\label{Section3}

We now introduce the NNWR algorithm for the model problem
(\ref{modelproblem}) for multiple subdomains; for the case of two
subdomains in 1d, see \cite{Kwok}. Suppose $\Omega$ is partitioned
into non-overlapping subdomains $\{\Omega_{i}$, $1\leq i
\leq N\}$, as illustrated in Figure \ref{FigNeumannDecomp}. 
\begin{figure}
  \centering
  \includegraphics[width=0.5\textwidth]{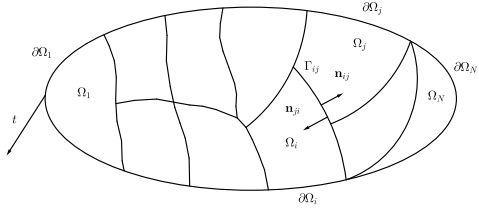}
 \vspace{-2.5em}
  \caption{Splitting into many non-overlapping subdomains}
  \label{FigNeumannDecomp}
\end{figure}
For $i=1,\ldots,N$ set
$\Gamma_i:=\partial\Omega_{i}\setminus\partial\Omega$,
$\Lambda_i:=\{j\in\{1,\ldots,N\}: \Gamma_{i}\cap\Gamma_{j} \, \mbox{has nonzero measure}\}$ and
$\Gamma_{ij}:=\partial\Omega_{i}\cap\partial\Omega_{j}$, so that the
interface of $\Omega_{i}$ can be rewritten as
$\Gamma_i=\bigcup_{j\in\Lambda_i}\Gamma_{ij}$. We denote by $\boldsymbol{n}_{ij}$
the unit outward normal for $\Omega_{i}$ on the interface
$\Gamma_{ij}$.
 
The NNWR algorithm starts with an initial guess
$w_{ij}^{0}(\boldsymbol{x},t)$ along the interfaces
$\Gamma_{ij}\times (0,T)$, $j\in\Lambda_i$, $i=1,\ldots,N$, and then
performs the following two-step iteration: at each iteration $k$, one
first solves Dirichlet problem on each $\Omega_{i}$ in parallel,
\begin{equation}\label{NNWRD}
  \begin{array}{rcll}
   \partial_{t}u_{i}^{k}-\nabla\cdot\left(\kappa(\boldsymbol{x},t)\nabla u_{i}^{k}\right)&=&f, & \mbox{in $\Omega_{i}$},\\
    u_{i}^{k}(\boldsymbol{x},0)&=&u_{0}(\boldsymbol{x}), & \mbox{in $\Omega_{i}$},\\
    u_{i}^{k}&=&g, & \mbox{on $\partial\Omega_{i}\setminus\Gamma_i$},\\
   u_{i}^{k}&=&w_{ij}^{k-1}, & \mbox{on $\Gamma_{ij}, j\in\Lambda_i$}.
  \end{array}
\end{equation}
One then solves Neumann problems on all subdomains,
\begin{equation}\label{NNWRN}
\begin{array}{rcll}
\partial_{t}\psi_{i}^{k}-\nabla\cdot\left(\kappa(\boldsymbol{x},t)\nabla \psi_{i}^{k}\right)&=&0, & \mbox{in $\Omega_{i}$},\\
   \psi_{i}^{k}(\boldsymbol{x},0)&=&0, & \mbox{in $\Omega_{i}$},\\
  \psi_{i}^{k}&=&0, & \mbox{on $\partial\Omega_{i}\setminus\Gamma_i$},\\
   \partial_{\boldsymbol{n}_{ij}}\psi_{i}^{k}&=&\partial_{\boldsymbol{n}_{ij}}u_{i}^{k}+\partial_{\boldsymbol{n}_{ji}}u_{j}^{k}, & \mbox{on $\Gamma_{ij}, j\in\Lambda_i$}.
\end{array}
\end{equation}
The interface values are then updated with the formula
\begin{equation}\label{NNWR2}
  w_{ij}^{k}(\boldsymbol{x},t)=w_{ij}^{k-1}(\boldsymbol{x},t)-\theta
  \left( \psi_{i}^{k}\left|_{\Gamma_{ij}\times(0,T)}\right.+\psi_{j}^{k}\left|_{\Gamma_{ij}\times(0,T)}\right.\right),
\end{equation}
where $\theta\in(0,1]$ is a relaxation parameter. 

As in the case of the DNWR algorithm, we prove our results first for
the one dimensional heat equation on the domain $\Omega:=(0,L)$ with
boundary conditions $u(0,t)=g_0(t)$ and $u(L,t)=g_L(t)$, for higher
spatial dimensions, see Section \ref{Section6}. We decompose $\Omega$
into non-overlapping subdomains $\Omega_{i}:=(x_{i-1},x_{i})$,
$i=1,\ldots,N$, and define the subdomain length
$h_{i}:=x_{i}-x_{i-1}$, and $h_{\min}:=\min_{1\leq i\leq N}h_{i}$. Our
initial guess is denoted by $\left\{w_{i}^{0}(t)\right\} _{i=1}^{N-1}$
on the interfaces $x_i$. By linearity, we again study the error
equations, $f=0$, $g_0=g_L=0$ and $u_0=0$, which leads with
$w_0^k(t)=w_N^k(t)=0$ for all $k$ to
\begin{equation}\label{NNWRL1}
  \begin{array}{rclrcl}
  \partial_t u_{i}^{k}-\partial_{xx} u_{i}^{k}&=&0, \qquad \textrm{in $\Omega_{i}$}, & 
    \partial_t\psi_{i}^k-\partial_{xx}\psi_{i}^k &=& 0,\qquad\textrm{in $\Omega_{i}$},\\
    u_{i}^{k}(x,0)&=&0, \qquad\textrm{in $\Omega_{i}$}, & \psi_{i}^{k}(x,0)&=&0, \qquad\textrm{in $\Omega_{i}$},\\
    u_{i}^k(x_{i-1},t) & = & w_{i-1}^{k-1}(t),&
    -\partial_{x}\psi_{i}^k(x_{i-1},t) & = & 
      (\partial_{x} u_{i-1}^k
      -\partial_{x} u_{i}^k)(x_{i-1},t),\\
   u_{i}^k(x_{i},t) & = & w_{i}^{k-1}(t),&
    \partial_{x}\psi_{i}^k(x_{i},t)&=&
     (\partial_{x} u_{i}^k
     -\partial_{x} u_{i+1}^k)(x_{i},t),
  \end{array}
\end{equation}
except for the first and last subdomain, where in the Neumann step the
Neumann conditions are replaced by homogeneous Dirichlet conditions
along the physical boundaries. The new
interface values for the next step are then defined as
\begin{equation}\label{NNWRL2}
  w_{i}^{k}(t)=w_{i}^{k-1}(t)
   -\theta\left(\psi_{i}^k(x_{i},t)+\psi_{i+1}^k(x_{i},t)\right) .
\end{equation}
We have the following convergence result for NNWR:
\begin{theorem}[Convergence of NNWR]\label{Theorem4}
  For $\theta=1/4$ and $T>0$ fixed, the NNWR algorithm (\ref{NNWRL1})--(\ref{NNWRL2}) converges superlinearly
  with the estimate
  \begin{equation}\label{Theorem4Result}
    \max_{1\leq i\leq N-1}\| w_{i}^{k}\|_{L^{\infty}(0,T)}
     \leq\left(\frac{\sqrt{6}}{1-e^{-\frac{(2k+1)h_{\min}^{2}}{T}}}\right)^{2k}
    e^{-k^{2}h_{\min}^{2}/T}\max_{1\leq i\leq N-1}
    \| w_{i}^{0}\|_{L^{\infty}(0,T)}.
  \end{equation}
\end{theorem}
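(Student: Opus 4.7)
The plan is to mimic the Laplace-transform strategy used for the DNWR theorems, but now propagated through the tridiagonal coupling that arises with multiple subdomains. First I would Laplace-transform \eqref{NNWRL1}: on each $\Omega_i=(x_{i-1},x_i)$ with $h_i=x_i-x_{i-1}$, the Dirichlet problem has the explicit solution
\begin{equation*}
 \hat{u}_i^k(x,s)=\frac{\hat{w}_{i-1}^{k-1}(s)\sinh((x_i-x)\sqrt{s})+\hat{w}_i^{k-1}(s)\sinh((x-x_{i-1})\sqrt{s})}{\sinh(h_i\sqrt{s})},
\end{equation*}
from which $\partial_x\hat{u}_i^k(x_{i-1},s)$ and $\partial_x\hat{u}_i^k(x_i,s)$ are immediately available as linear combinations of $\hat{w}_{i-1}^{k-1}$ and $\hat{w}_i^{k-1}$ with coefficients built from $\coth(h_i\sqrt{s})$ and $\cosech(h_i\sqrt{s})$. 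Plugging these fluxes as Neumann data into \eqref{NNWRN} and solving gives $\hat{\psi}_i^k(x_i,s)$ and $\hat{\psi}_i^k(x_{i-1},s)$ in closed form. Substituting into \eqref{NNWRL2} with $\theta=1/4$ yields a tridiagonal recurrence
\begin{equation*}
 \hat{w}_i^k(s)=\alpha_i(s)\,\hat{w}_{i-1}^{k-1}(s)+\beta_i(s)\,\hat{w}_i^{k-1}(s)+\gamma_i(s)\,\hat{w}_{i+1}^{k-1}(s),
\end{equation*}
with boundary modifications for $i=1$ and $i=N-1$ reflecting the Dirichlet physical boundary. The precise algebraic point of choosing $\theta=1/4$ is that, after the two Neumann contributions $\psi_i^k$ and $\psi_{i+1}^k$ are added, the diagonal symbol $\beta_i(s)$ collapses into the same form as the $G$-type symbol in \eqref{Gdef} (up to sign), and $\alpha_i,\gamma_i$ are also linear combinations of such $G$-type symbols.

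Next I would unroll the recurrence $k$ times. By induction, $\hat{w}_i^k(s)$ is a sum, over all walks $i=i_k\to i_{k-1}\to\cdots\to i_0$ of length $k$ on $\{1,\dots,N-1\}$ with steps in $\{-1,0,+1\}$, of products of the symbols $\alpha,\beta,\gamma$ evaluated on the subdomains visited, multiplied by $\hat{w}_{i_0}^0(s)$. Each factor has the form $G_j(s):=\sinh((h_j-h_{j\pm 1})\sqrt{s})/\bigl[\sinh(h_j\sqrt{s})\cosh(h_{j\pm 1}\sqrt{s})\bigr]$ or a close cousin, which by the same argument as after \eqref{Gdef} is $\mathcal{O}(s^{-p})$ in right half-planes and therefore the Laplace transform of a smooth time kernel. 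After inverse transforming, $w_i^k(t)$ becomes a sum of $k$-fold convolutions against $w_{i_0}^0$.

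The core of the proof is then an $L^\infty(0,T)$ estimate of each such $k$-fold convolution, which I would reduce to the kernel estimates promised in Section~\ref{Section4}. The crucial ingredient, exactly as in the proof of Theorem~\ref{Theorem3}, is that the inverse transform of a product of $k$ factors like $G(s)$ built on a subdomain of length $\ell\ge h_{\min}$ has $L^1(0,T)$ norm bounded by
\begin{equation*}
 \frac{C^{2k}\,e^{-k^2 h_{\min}^2/T}}{\bigl(1-e^{-(2k+1)h_{\min}^2/T}\bigr)^{2k}},
\end{equation*}
with $C=\sqrt{2}$ in the two-subdomain case. For NNWR, the number of length-$k$ walks on the tridiagonal graph is at most $3^k$, and each walk contributes a product with constant $\sqrt{2}$ per factor, so after bounding $\max_{i_0}\|w_{i_0}^0\|_{L^\infty}$ out of the sum one obtains a combined constant $\sqrt{2\cdot 3}=\sqrt{6}$ per iteration, matching \eqref{Theorem4Result}. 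The inductive step uses $\|f\ast g\|_{L^\infty(0,T)}\le\|f\|_{L^1(0,T)}\|g\|_{L^\infty(0,T)}$ and the fact that $h_{\min}\le h_j$ for every subdomain $j$ visited.

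The main obstacle, which I expect to occupy most of the technical work, is the $L^1$ estimate of the $k$-fold convolution kernel with mixed lengths and mixed $G$-types (diagonal versus off-diagonal entries of the tridiagonal symbol), together with boundary-subdomain modifications. Controlling these uniformly by the worst case $h_{\min}$ is what produces the factor $e^{-k^2 h_{\min}^2/T}$; in practice this will mean reducing each kernel, via a monotonicity or stretching argument in the $\erfc$-type representation, to the canonical kernel associated with length $h_{\min}$. Once this reduction is in place, the combinatorial bookkeeping over the $3^k$ walks and the collection of the constant $\sqrt{6}$ is routine, and the endpoint-subdomain modifications only improve the constant (fewer neighbors), so they do not affect the stated bound.
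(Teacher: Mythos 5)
Your overall strategy (Laplace transform, explicit subdomain solves, a recurrence for the interface traces, $L^1$ kernel bounds via positivity, and a final convolution producing the superlinear factor) is the same as the paper's, but there is a concrete structural error that derails the bookkeeping. The recurrence for $\hat{w}_i^k$ is \emph{not} tridiagonal: the Neumann data for $\psi_i^k$ at $x_{i-1}$ and $x_i$ involve the fluxes of $u_{i-1}^k$ and $u_{i+1}^k$, which themselves depend on $\hat{w}_{i-2}^{k-1}$ and $\hat{w}_{i+1}^{k-1}$, so the update $w_i^k = w_i^{k-1}-\theta(\psi_i^k(x_i)+\psi_{i+1}^k(x_i))$ couples $\hat{w}_i^k$ to all of $\hat{w}_{i-2}^{k-1},\ldots,\hat{w}_{i+2}^{k-1}$ (see \eqref{5.9}). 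Consequently your walk-counting argument would have to range over five neighbors, not three, and your derivation of the constant as $\sqrt{2\cdot 3}=\sqrt{6}$ from ``$3^k$ walks times $\sqrt{2}$ per factor'' does not survive. In the paper the $\sqrt{6}$ arises quite differently: after multiplying through by $\sinh^{2k}(h_{\min}\sqrt{s})$ to define $\hat{\nu}_i^k=\sinh^{2k}(h_{\min}\sqrt{s})\hat{w}_i^k$, every kernel $\hat{t}_{i,j}$ in the pentadiagonal recurrence is shown to have $L^1$ mass at most $1$, so the \emph{weighted row sum} $\frac14(2+1+1+1+1)=\frac32$ gives $\|\nu_i^k\|\le \frac32\max_j\|\nu_j^{k-1}\|$; the remaining factor $\mathcal{L}^{-1}(\cosech^{2k}(h_{\min}\sqrt{s}))$ is then bounded via Lemma~\ref{KernelLemma} by $\bigl(2/(1-e^{-(2k+1)h_{\min}^2/T})\bigr)^{2k}e^{-k^2h_{\min}^2/T}$, and $(3/2)^k\cdot 4^k = 6^k = (\sqrt{6})^{2k}$.

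A second, related gap is that your per-walk $L^1(0,T)$ estimate of a $k$-fold product of ``$G$-type'' symbols, reduced ``via a monotonicity or stretching argument'' to the canonical kernel of length $h_{\min}$, is not substantiated and is in fact the crux of the proof. The paper's mechanism is to insert $\sinh^2(h_{\min}\sqrt{s})$ into each kernel's numerator and then verify, term by term (including the diagonal kernel, which needs the product-to-sum identity $\sinh A\cosh B=\frac12(\sinh(A+B)+\sinh(A-B))$ and a case-dependent pairing of numerator and denominator factors, plus separate treatment of the two boundary subdomains with $\cosh$ ratios), that each resulting ratio of hyperbolic functions is the transform of a positive kernel with integral at most $1$ by Lemmas~\ref{PositivityLemma} and~\ref{LimitLemma}. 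Only the single accumulated factor $\cosech^{2k}(h_{\min}\sqrt{s})$ is inverted explicitly at the end. Without carrying out this cancellation and positivity analysis, the superlinear factor $e^{-k^2h_{\min}^2/T}$ and the stated constant are not established.
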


The proof of Theorem \ref{Theorem4} will also be given in Section
\ref{Section5}. As in the case of the DNWR, the parameter choice
$\theta\ne 1/4$ requires different analysis techniques and is the subject
of further studies. 

\section{Kernel estimates}\label{Section4}

The convergence results given in Theorems \ref{Theorem2},
\ref{Theorem3} and \ref{Theorem4} are based on technical estimates of
kernels arising in the Laplace transform of the DNWR and NNWR
algorithms. We present in this section the precise estimates needed.

\subsection{Properties of Laplace transforms}

We start with several elementary properties of positive functions and
their Laplace transforms. 
\begin{lemma}\label{SimpleLaplaceLemma}
Let $g$ and $w$ be two real-valued functions in $(0,\infty)$ with
$\hat{w}(s)=\mathcal{L}\left\{ w(t)\right\}$ the Laplace transform of
$w$. Then for $t\in(0,T)$, we have the following properties:
\begin{enumerate}
  \item \label{L1}If $g(t)\geq0$ and $w(t)\geq0$, the convolution
    $(g*w)(t):=\int_{0}^{t}g(t-\tau)w(\tau)d\tau\geq0$.

  \item \label{L2} $\| g*w\|_{L^{1}(0,T)}\leq\| g\|_{L^{1}(0,T)}\| w\|_{L^{1}(0,T)}.$

  \item \label{L3} $\bigl|(g*w)(t)\bigl|\leq\| g\|_{L^{\infty}(0,T)}\int_{0}^{T}\bigl|w(\tau)\bigl|d\tau.$

 \item \label{L4} $\int_{0}^{t}w(\tau)d\tau=(H*w)(t)=\mathcal{L}^{-1}\left(
   \frac{\hat{w}(s)}{s}\right) $, $H(t)$ being the Heaviside step
   function.
\end{enumerate}
\end{lemma}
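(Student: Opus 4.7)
The plan is to prove each of the four items in turn; all of them are direct consequences of the definition of convolution plus one standard tool (Fubini's theorem for item 2, the convolution theorem for Laplace transforms for item 4).

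For item \ref{L1}, I would simply observe that for $t \in (0,T)$ the integrand $g(t-\tau)w(\tau)$ in $(g*w)(t) = \int_0^t g(t-\tau) w(\tau)\, d\tau$ is a product of two nonnegative functions on the interval of integration, so the integral is nonnegative. For item \ref{L3}, I would pull the absolute value inside the integral, bound $|g(t-\tau)|$ by $\|g\|_{L^\infty(0,T)}$ uniformly (valid since $t-\tau \in (0,t) \subseteq (0,T)$), pull the constant out, and then enlarge the interval of integration from $(0,t)$ to $(0,T)$.

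For item \ref{L2}, the plan is the standard Young-type computation on a finite interval: write
\begin{equation*}
\|g*w\|_{L^1(0,T)} \leq \int_0^T \int_0^t |g(t-\tau)||w(\tau)|\, d\tau\, dt,
\end{equation*}
swap the order of integration via Fubini (the integrand being nonnegative), make the change of variables $u = t-\tau$ in the inner integral to reveal an integral of $|g|$ over $(0, T-\tau) \subseteq (0,T)$, and bound it by $\|g\|_{L^1(0,T)}$; what remains is $\|g\|_{L^1(0,T)}\|w\|_{L^1(0,T)}$. For item \ref{L4}, I note that the Heaviside function satisfies $H(t-\tau) = 1$ whenever $\tau < t$, so $(H*w)(t)$ collapses to $\int_0^t w(\tau)\, d\tau$; the identification with $\mathcal{L}^{-1}(\hat{w}(s)/s)$ then follows from $\mathcal{L}\{H\}(s) = 1/s$ together with the convolution theorem $\mathcal{L}\{H*w\} = \mathcal{L}\{H\}\cdot\mathcal{L}\{w\}$.

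None of these steps presents a genuine obstacle; the only point that requires minor care is item \ref{L2}, where one must check that after the change of variables the inner integral is taken over a subset of $(0,T)$ so that the $L^1(0,T)$-norm of $g$ is the correct majorant (as opposed to its norm on a larger interval). Once this is observed, all four statements follow in a few lines.
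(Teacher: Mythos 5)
Your proposal is correct and fills in exactly the standard arguments that the paper summarizes with ``the proofs follow directly from the definitions'': nonnegativity of the integrand, the Young/Fubini computation, bounding $|g|$ by its supremum, and the convolution theorem for the Laplace transform. No substantive difference from the paper's (implicit) approach.
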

\begin{proof}
  The proofs follow directly from the definitions.
\end{proof}
\begin{lemma}\label{LimitLemma}
  Let, $w(t)$ be a continuous and $L^{1}$-integrable function on
  $(0,\infty)$ with $w(t)\geq0$ for all $t\geq0$, and
  $\hat{w}(s)=\mathcal{L}\left\{ w(t)\right\}$ be its Laplace
  transform. Then, for $\tau>0$, we have the bound
  \begin{equation}
    \int_{0}^{\tau}|w(t)|dt\leq \lim_{s\rightarrow0+}\hat{w}(s).
  \end{equation}
\end{lemma}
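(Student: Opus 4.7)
The plan is to exploit the nonnegativity of $w$ in two places: first to drop the absolute value in the integrand on the left, and second to justify passing the limit $s \to 0+$ inside the defining integral of $\hat{w}(s)$ by monotone convergence.

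First, since $w(t) \geq 0$ for all $t \geq 0$, we have $|w(t)| = w(t)$, so the left-hand side of the claimed inequality equals $\int_0^\tau w(t)\,dt$. Next, for any $s > 0$, the definition of the Laplace transform gives
\begin{equation*}
  \hat{w}(s) = \int_0^\infty e^{-st}\, w(t)\,dt.
\end{equation*}
As $s$ decreases to $0$, the family of functions $e^{-st} w(t)$ is monotonically increasing pointwise in $t$ toward $w(t)$. Since each member is nonnegative and $w\in L^1(0,\infty)$, the monotone convergence theorem applies and yields
\begin{equation*}
  \lim_{s\to 0+}\hat{w}(s) = \int_0^\infty w(t)\,dt.
\end{equation*}
In particular, the limit on the right-hand side of the lemma exists as a finite number equal to $\|w\|_{L^1(0,\infty)}$.

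Finally, since $w(t) \geq 0$, truncating the domain of integration from $(0,\infty)$ to $(0,\tau)$ can only decrease the value of the integral, so
\begin{equation*}
  \int_0^\tau |w(t)|\,dt = \int_0^\tau w(t)\,dt \leq \int_0^\infty w(t)\,dt = \lim_{s\to 0+}\hat{w}(s),
\end{equation*}
which is the desired bound. There is no real obstacle in this argument; the only point requiring care is the justification of the exchange of the limit and the integral, which the hypothesis $w \geq 0$ combined with $L^1$-integrability handles cleanly via monotone convergence (continuity of $w$ is not needed for this step, but may have been included so that pointwise statements about $w$ are unambiguous).
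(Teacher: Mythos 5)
Your proof is correct and follows essentially the same route as the paper: drop the absolute value by positivity, extend the integral to $(0,\infty)$, and exchange the limit $s\to 0+$ with the integral. The only (immaterial) difference is that you invoke monotone convergence where the paper uses dominated convergence with $w$ itself as the dominating function; both are valid here.
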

\begin{proof}
With the definition of the Laplace transform (\ref{LaplaceTransform}) and
using positivity, we have
\begin{eqnarray*}
  \int_{0}^{\tau}|w(t)|dt & = & \int_{0}^{\tau}w(t)dt\leq\int_{0}^{\infty}w(t)dt
  =  \int_{0}^{\infty} \lim_{s\rightarrow0+}e^{-st}w(t)dt\\
  &= & \lim_{s\rightarrow0+}\int_{0}^{\infty}e^{-st}w(t)dt
 =  \lim_{s\rightarrow0+}\hat{w}(s),
\end{eqnarray*}
where the dominated convergence theorem was used to exchange the order
of limit and integration.
\end{proof}

\subsection{Positivity}
In order to use Lemma \ref{LimitLemma} in our analysis, we have to
show positivity of inverse transforms of kernels appearing in the DNWR
and NNWR iteration. These results are established in the following lemma.
\begin{lemma}\label{PositivityLemma}
  Let $\beta>\alpha\geq0$ and $s$ be a complex variable. Then, for
$t\in(0,\infty)$ 
\[
  \varphi(t):=\mathcal{L}^{-1}\left\{ \frac{\sinh(\alpha\sqrt{s})}
    {\sinh(\beta\sqrt{s})}\right\} \geq0\quad\mbox{and}\quad
  \psi(t):=\mathcal{L}^{-1}\left\{ \frac{\cosh(\alpha\sqrt{s})}
  {\cosh(\beta\sqrt{s})}\right\} \geq0.
\]
\end{lemma}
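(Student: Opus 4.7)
The plan is to exhibit both $\varphi(t)$ and $\psi(t)$ as response kernels of the one-dimensional heat equation on $(0,\beta)$, and then invoke the parabolic maximum principle. Because $\beta>\alpha\geq 0$, both ratios inside the Laplace inverse decay like $e^{-(\beta-\alpha)\sqrt{\operatorname{Re}(s)}}$ as $|s|\to\infty$ in the right half plane, so by the same reasoning the authors used earlier for $G(s)$ (citing \cite[p.~178]{Church}), both $\varphi$ and $\psi$ are continuous functions on $(0,\infty)$, not merely tempered distributions.

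For $\varphi$, I would consider the initial-boundary value problem $u_t-u_{xx}=0$ on $(0,\beta)\times(0,T)$ with $u(0,t)=0$, $u(\beta,t)=v(t)$, $u(x,0)=0$, where $v\in C_c^\infty(0,\infty)$ is arbitrary but nonnegative. A Laplace transform in $t$ reduces this to the ODE $(s-\partial_{xx})\hat{u}=0$ with $\hat{u}(0,s)=0$ and $\hat{u}(\beta,s)=\hat{v}(s)$, whose solution is $\hat{u}(x,s)=\hat{v}(s)\sinh(x\sqrt{s})/\sinh(\beta\sqrt{s})$. Evaluating at $x=\alpha$ and inverting the transform gives $u(\alpha,t)=(\varphi*v)(t)$. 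The weak parabolic maximum principle then forces $u\geq 0$ everywhere in $(0,\beta)\times(0,T)$, since the initial and boundary data are all nonnegative; in particular $(\varphi*v)(t)\geq 0$ for every $t>0$ and every such $v$. Taking $v$ to be nonnegative mollifiers approximating $\delta(\tau-t_0)$ and using continuity of $\varphi$ to pass to the limit yields $\varphi(t-t_0)\geq 0$ for $t>t_0$; since $t_0>0$ is arbitrary, $\varphi\geq 0$ on $(0,\infty)$.

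For $\psi$, the same argument works, but with the Dirichlet condition at $x=0$ replaced by the homogeneous Neumann condition $u_x(0,t)=0$. Solving the transformed ODE now gives $\hat{u}(x,s)=\hat{v}(s)\cosh(x\sqrt{s})/\cosh(\beta\sqrt{s})$, so that $u(\alpha,t)=(\psi*v)(t)$. The maximum principle still applies to the mixed Dirichlet-Neumann problem, since a no-flux boundary cannot host a strict minimum (Hopf's lemma), so $u\geq 0$, and the same mollifier limit yields $\psi\geq 0$.

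The only delicate step is the transition from ``$\varphi*v\geq 0$ for every nonnegative $v$'' to ``$\varphi\geq 0$ pointwise'': a nonnegative distribution is a measure, not automatically a pointwise nonnegative function. This is why I want the decay estimate on the Laplace image upfront, which upgrades $\varphi$ and $\psi$ to continuous functions and legitimizes the mollifier argument. Everything else, in particular the validity of the maximum principle and the Laplace-ODE correspondence, is classical.
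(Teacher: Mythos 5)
Your proof is correct and follows essentially the same route as the paper: identify $\varphi$ and $\psi$ as heat-equation response kernels, upgrade them to continuous functions via the decay of their Laplace images, and deduce pointwise nonnegativity from the maximum principle by testing against nonnegative boundary data (you use mollifiers where the paper uses indicator functions and a contradiction argument, which is equivalent). The only substantive variation is your treatment of $\psi$: you pose a mixed Neumann--Dirichlet problem on $(0,\beta)$ and invoke Hopf's lemma at the no-flux boundary, whereas the paper reflects evenly to a pure Dirichlet problem on $(-\beta,\beta)$ and thereby needs only the standard Dirichlet maximum principle.
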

\begin{proof}
We first prove that $\varphi$ and $\psi$ are well-defined and continous functions
on $(0,\infty)$. Setting $s = re^{i\vartheta}$, a short calculation shows that for $\beta > \alpha \geq 0$ and for every positive $p$
$$ \left|\frac{s^p \sinh(\alpha\sqrt{s})}{\sinh(\beta\sqrt{s})}\right|
\leq r^p\cdot \left|\frac{e^{\alpha\sqrt{r/2}} + e^{-\alpha\sqrt{r/2}}}{e^{\beta\sqrt{r/2}} - e^{-\beta\sqrt{r/2}}}\right|
\to 0 \qquad \text{as $r \to \infty$,} $$
so by \cite[p.~178]{Church}, its inverse Laplace transform exists and is continuous (in fact, infinitely differentiable). Thus, $\varphi$ is a continuous function. A similar argument
holds for $\psi$.

Next, we prove the positivity of $\varphi$ and $\psi$ by noting that these kernels are related to
solutions of the heat equation. Let us consider the heat equation $u_{t}-u_{xx}=0$ on
$(0,\beta)$ with initial condition $u(x,0)=0$ and boundary conditions
$u(0,t)=0$, $u(\beta,t)=g(t)$. If $g$ is non-negative, then by the maximum principle,
this boundary value problem has a non-negative solution $u(\alpha,t)$ for all
$\alpha\in[0,\beta]$, $t>0$.  Now performing a Laplace transform of the
heat equation in time, we obtain the transformed solution along
$x=\alpha$ to be
\[
  \hat{u}(\alpha,s)
  =\hat{g}(s)\frac{\sinh(\alpha\sqrt{s})}{\sinh(\beta\sqrt{s})}
  \quad\Longrightarrow\quad 
  u(\alpha,t)=\int_{0}^{t}g(t-\tau)\varphi(\tau)d\tau.
\]
We now prove that $\varphi(t)\ge 0$ by contradiction: suppose
$\varphi(t_{0})<0$ for some $t_{0}>0$. Then by the continuity of
$\varphi$, there exists $\delta>0$ such that $\varphi(\tau)<0$ for
$\tau\in(t_{0}-\delta,t_{0}+\delta)$.  Now for $t>t_{0}+\delta,$ we
choose a non-negative $g$ as follows:
\[
g(\zeta)=\begin{cases}
1, & \zeta\in\left(t-t_{0}-\delta,t-t_{0}+\delta\right)\\
0, & \textrm{otherwise}.
\end{cases}
\]
Then
$u(\alpha,t)=\int_{t_{0}-\delta}^{t_{0}+\delta}g(t-\tau)\varphi(\tau)d\tau
=\int_{t_{0}-\delta}^{t_{0}+\delta}\varphi(\tau)d\tau<0$, which is a
contradiction, and hence $\varphi$ must be non-negative. To prove the
result for $\psi$, we use again the heat equation $u_{t}-u_{xx}=0$,
$u(x,0)=0$, but on the domain $(-b,b)$ and with boundary conditions
$u(-\beta,t)=u(\beta,t)=g(t)$. Using a Laplace transform in
time gives as solution at $x=\alpha$ 
$$
  \hat{u}(\alpha,s)=
   \hat{g}(s)\frac{\cosh(\alpha\sqrt{s})}{\cosh(\beta\sqrt{s})},
$$
and hence a similar argument as in the first case proves that $\psi$
is also non-negative.
\end{proof}

\subsection{Specific kernel estimates}
The following lemma contains specific estimates for the inverse
Laplace transform of two kernels in terms of infinite sums.
\begin{lemma}\label{KernelLemma}
For $k=1,2,3,\ldots$, we have the identities
\begin{eqnarray}\label{Kernel0}
  \mathcal{L}^{-1}\left(\cosech^{k}(\alpha\sqrt{s})\right)
    & = & 2^{k}{\displaystyle \sum_{m=0}^{\infty}}\binom{m+k-1}{m}
   \frac{(2m+k)\alpha}{\sqrt{4\pi t^3}}e^{-(2m+k)^2\alpha^2/4t},\\
   \label{Kernel2}
   \mathcal{L}^{-1}\left(\frac{\cosech^{k}(\alpha\sqrt{s})}{s}\right)
    & = & 2^{k}{\displaystyle \sum_{m=0}^{\infty}}\binom{m+k-1}{m}
   \erfc\left(\frac{(2m+k)\alpha}{2\sqrt{t}}\right).
\end{eqnarray}
In particular, 
both functions are positive for $t > 0$.
\end{lemma}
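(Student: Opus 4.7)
The plan is to expand $\cosech^k(\alpha\sqrt{s})$ as a rapidly convergent exponential series in the half plane $\mathrm{Re}(s) > 0$ and then invert term by term, using two classical inverse Laplace transforms,
\begin{equation*}
  \mathcal{L}^{-1}\bigl\{e^{-a\sqrt{s}}\bigr\}(t) = \frac{a}{\sqrt{4\pi t^{3}}}\,e^{-a^{2}/4t},\qquad
  \mathcal{L}^{-1}\Bigl\{\frac{e^{-a\sqrt{s}}}{s}\Bigr\}(t) = \erfc\Bigl(\frac{a}{2\sqrt{t}}\Bigr),\quad a>0,
\end{equation*}
which can be found in \cite{Church,Oberhett}. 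First I would write
\begin{equation*}
  \cosech(\alpha\sqrt{s}) \;=\; \frac{2e^{-\alpha\sqrt{s}}}{1-e^{-2\alpha\sqrt{s}}},
\end{equation*}
so that for $\mathrm{Re}(s)>0$ one has $|e^{-2\alpha\sqrt{s}}|<1$ and the generalized binomial expansion
\begin{equation*}
  (1-z)^{-k} \;=\; \sum_{m=0}^{\infty}\binom{m+k-1}{m} z^{m}
\end{equation*}
yields the formal identity
\begin{equation*}
  \cosech^{k}(\alpha\sqrt{s}) \;=\; 2^{k}\sum_{m=0}^{\infty}\binom{m+k-1}{m} e^{-(2m+k)\alpha\sqrt{s}}.
\end{equation*}

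The second step is to apply $\mathcal{L}^{-1}$ term by term to this series and to the corresponding series divided by $s$, using the two inversion formulas above with $a=(2m+k)\alpha$. This gives exactly the right hand sides of \eqref{Kernel0} and \eqref{Kernel2}. The main obstacle is justifying the interchange of $\mathcal{L}^{-1}$ with the infinite sum; I would do this by observing that for any fixed $t>0$ the resulting series has Gaussian decay in $m$, since
\begin{equation*}
  \binom{m+k-1}{m}\frac{(2m+k)\alpha}{\sqrt{4\pi t^{3}}}e^{-(2m+k)^{2}\alpha^{2}/4t}
\end{equation*}
is dominated by a polynomial in $m$ times $e^{-m^{2}\alpha^{2}/t}$, and likewise for the $\erfc$ series using the bound $\erfc(x)\le e^{-x^{2}}$ for $x\ge 0$. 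Hence the series converge absolutely and uniformly on compact subsets of $(0,\infty)$. The standard way to conclude is to check that the resulting sum $F(t)$ has Laplace transform equal to the analytic expansion above for all $s$ with sufficiently large real part: by uniform convergence one may exchange the Laplace integral with the sum, recovering $\cosech^{k}(\alpha\sqrt{s})$ (resp.\ $\cosech^{k}(\alpha\sqrt{s})/s$); uniqueness of the Laplace transform then identifies $F$ with the inverse transform on the left-hand side.

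Finally, positivity is immediate from the explicit formulas: for $t>0$ and $\alpha>0$, every term on the right-hand side of \eqref{Kernel0} is strictly positive because $\binom{m+k-1}{m}\ge 1$, $(2m+k)\alpha>0$ and the Gaussian factor is positive; likewise every term on the right-hand side of \eqref{Kernel2} is strictly positive since $\erfc(x)>0$ for all real $x$. This yields the final claim that both kernels are positive for $t>0$, which is all that will be needed in Section \ref{Section5} for the convergence proofs via Lemma \ref{LimitLemma}.
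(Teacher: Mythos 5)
Your proposal follows essentially the same route as the paper: the binomial expansion $\cosech^k(\alpha\sqrt{s})=2^k\sum_{m\ge 0}\binom{m+k-1}{m}e^{-(2m+k)\alpha\sqrt{s}}$, term-by-term inversion via the two classical formulas, identification of the resulting sum through its Laplace transform and uniqueness, and positivity read off termwise. The one soft spot is your justification of the interchange: uniform convergence on compact subsets of $(0,\infty)$ does not by itself legitimize swapping the sum with the improper integral $\int_0^\infty e^{-st}(\cdot)\,dt$; the paper instead integrates each term exactly to get $\sum_m\int_0^\infty|e^{-st}f_m(t)|\,dt\le\cosech^k(\alpha\sqrt{s_0})<\infty$ for $\mathrm{Re}(s)\ge s_0>0$ and invokes Fubini--Tonelli, which is the cleaner way to close this step.
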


\begin{proof}
Using that $\bigl|e^{-2\alpha\sqrt{s}}\bigl|<1$ for $\mbox{Re}(s)>0$,
we first expand $\mbox{cosech}$ into an infinite binomial series,
\begin{align}
  \textrm{cosech}^{k}(\alpha\sqrt{s}) & =\left(\frac{2}{e^{\alpha\sqrt{s}}-e^{-\alpha\sqrt{s}}}\right)^{k}=2^{k}e^{-k\alpha\sqrt{s}}\left(1-e^{-2\alpha\sqrt{s}}\right)^{-k}\nonumber \\ \label{eq4.4}
 & =2^{k}{\displaystyle \sum_{m=0}^{\infty}}\binom{m+k-1}{m}e^{-(2m+k)\alpha\sqrt{s}}.
\end{align}
Now using the inverse Laplace transform (see 
Oberhettinger \cite{Oberhett})
\begin{equation}\label{InversionFormula0}
  \mathcal{L}^{-1}\left( e^{-\lambda\sqrt{s}}\right) 
    =\frac{\lambda}{\sqrt{4\pi t^3}}e^{-\lambda^2/4t},
   \quad\lambda > 0,
\end{equation}
we obtain
\begin{eqnarray}
  \mathcal{L}^{-1}\left(\text{cosech}^{k}(\alpha\sqrt{s})\right)
  & = & 2^{k}\sum_{m=0}^{\infty}\binom{m+k-1}{m}
  \mathcal{L}^{-1}\left( e^{-(2m+k)\alpha\sqrt{s}}\right) \nonumber \\
  & = & 2^{k}\sum_{m=0}^{\infty}\binom{m+k-1}{m}\frac{(2m+k)\alpha}{\sqrt{4\pi t^3}}e^{-(2m+k)^2\alpha^2/4t}.\label{eq4.5a}
\end{eqnarray}
To justify taking the inverse Laplace transform term by term, we prove that the Laplace
transform of the right-hand side of \eqref{eq4.5a} indeed gives $\cosech^k(\alpha\sqrt{s})$.
Let $f_m(t) = 2^k{m+k-1\choose m}\frac{(2m+k)\alpha}{\sqrt{4\pi t^3}}e^{-(2m+k)^2\alpha^2/4t}$ be the $m$th
term of the series. Then for any real parameter $s_0 > 0$, we have for $\textrm{Re}(s) > s_0$
\begin{align*}
\int_0^\infty |e^{-st}f_m(t)| \,dt &\leq
2^k \int_0^\infty e^{-s_0t}{m+k-1\choose m}\frac{(2m+k)\alpha}{\sqrt{4\pi t^3}}e^{-(2m+k)^2\alpha^2/4t}\,dt\\
&= 2^k{m+k-1 \choose m}e^{-(2m+k)\alpha \sqrt{s_0}}.
\end{align*}
Thus, we have
$$ \sum_{m=0}^\infty \int_0^\infty |e^{-st}f_m(t)| \,dt
\leq \cosech^k(\alpha\sqrt{s_0}) < \infty. $$
This allows us to use Fubini's theorem, where the product measure is between the discrete counting measure and the Lebesgue measure on $[0,\infty)$. We thus obtain for all $\textrm{Re}(s) \geq s_0$
$$  \int_0^\infty e^{-st} \sum_{m=0}^\infty
f_m(t)\, dt = \sum_{m=0}^\infty \int_0^\infty e^{-st}f_m(t)\,dt = \cosech^k(\alpha\sqrt{s}). $$
The first identity \eqref{Kernel0} then follows by taking the inverse Laplace transform on both sides.
Since each $f_m(t)$ is positive for $t > 0$, we conclude that the limit function $\sum_{m=0}^\infty f_m(t)$ is also positive.

For the second identity, we need the inverse Laplace transform
\begin{equation}\label{InversionFormula}
  \mathcal{L}^{-1}\left( \frac{1}{s}e^{-\lambda\sqrt{s}}\right) 
    =\textrm{erfc}\left(\frac{\lambda}{2\sqrt{t}}\right),
   \quad\lambda > 0.
\end{equation}
By dividing the expansion \eqref{eq4.4} by $s$, we obtain
\begin{eqnarray}
  \mathcal{L}^{-1}\left(\frac{\text{cosech}^{k}(\alpha\sqrt{s})}{s}\right)
  & = & 2^{k}\sum_{m=0}^{\infty}\binom{m+k-1}{m}
  \mathcal{L}^{-1}\left( \frac{1}{s}e^{-(2m+k)\alpha\sqrt{s}}\right) \nonumber \\
  & = & 2^{k}\sum_{m=0}^{\infty}\binom{m+k-1}{m}\textrm{erfc}
  \left(\frac{(2m+k)\alpha}{2\sqrt{t}}\right),
\end{eqnarray}
where we justify the interchanging of sums and inverse transforms in the same way as above.
Since $\erfc$ is a positive function, so is the kernel \eqref{Kernel2}.
%
\end{proof}

\section{Proofs of the main theorems}\label{Section5}

We now prove the main convergence results for the DNWR and NNWR
algorithms stated in Section \ref{Section2} and \ref{Section3}. 

\subsection{Proof of Theorem \ref{Theorem2} for DNWR} 

For $\theta=1/2$ we get from (\ref{hrecurrenceLaplace}) and using part
\ref{L3} of Lemma \ref{SimpleLaplaceLemma}
\begin{equation}\label{hkestimate}
  \left|h^{k}(t)\right|=\left|2^{-k}(-1)^{k}\left(h^{0}*F_{k}\right)(t)\right|
  \leq2^{-k}\| h^{0}\|_{L^{\infty}(0,T)}
  \int_{0}^{T}\left|F_{k}(\tau)\right|d\tau.
\end{equation}
So in order to get an $L^{\infty}$ convergence estimate, we need to
bound $\int_{0}^{T}\left|F_{k}(\tau)\right|d\tau$. Now in Theorem
\ref{Theorem2}, $a>b$, and $\mathcal{L}\left( -F_{1}(t)\right)
=\frac{\sinh((a-b)\sqrt{s})}{\sinh(a\sqrt{s})}\cdot\frac{1}{\cosh(b\sqrt{s})}$.
So by Lemma \ref{LimitLemma} and the fact that the convolution of two
positive functions is positive, see Lemma \ref{SimpleLaplaceLemma}
point \ref{L1}, $-F_{1}(t)$ is positive. Thus, $(-1)^{k}F_{k}(t)\geq0$
for all $t$, and we obtain from Lemma \ref{LimitLemma}
\[
  \int_{0}^{T}\left|(-1)^{k}F_{k}(\tau)\right|d\tau
  \leq \lim_{s\rightarrow0+}(-1)^{k}G^{k}(s)
  =\left(\frac{a-b}{a}\right)^{k}.
\]
This bound is valid for arbitrary values of $T$, and hence we get from
(\ref{hkestimate})
\[
  \| h^{k}\|_{L^{\infty}(0,\infty)}\leq
  \left(\frac{a-b}{2a}\right)^{k}\| h^{0}\|_{L^{\infty}(0,\infty)},
\]
which shows that the algorithm is converging at least linearly for
$a>b$. To get a more accurate bound, we define
$\hat{v}^{k}(s):=\cosh^{k}(b\sqrt{s})\hat{h}^{k}(s)$, and 
rewrite (\ref{hrecurrenceLaplace}) for $\theta=1/2$ as
\[
  \hat{v}^{k}(s)=2^{-k}\frac{\sinh^{k}((a-b)\sqrt{s})}
    {\sinh^{k}(a\sqrt{s})}\hat{v}^{0}(s).
\]
So if we set $g_{k}(t):=\mathcal{L}^{-1}\left(
\frac{\sinh^{k}((a-b)\sqrt{s})}{\sinh^{k}(a\sqrt{s})}\right)$, then
part \ref{L3} of Lemma \ref{SimpleLaplaceLemma} yields
\[
  \| v^{k}\|_{L^{\infty}(0,T)}\leq
  2^{-k}\| v^{0}\|_{L^{\infty}(0,T)}\int_{0}^{T}|g_{k}(\tau)|d\tau.
\]
By Lemma \ref{LimitLemma},
$\int_{0}^{T}g_{k}(\tau)d\tau\leq\left(\frac{a-b}{a}\right)^{k}$, and
we therefore obtain
\begin{equation}\label{5.2}
  \| v^{k}\|_{L^{\infty}(0,T)}\leq
  \left(\frac{a-b}{2a}\right)^{k}\| v^{0}\|_{L^{\infty}(0,T)}.
\end{equation}
Setting $f_{k}(t):=\mathcal{L}^{-1}\left(\frac{1}{\cosh^{k}(b\sqrt{s})}\right)$
we have
\[
  h^{k}(t)=\left(f_{k}*v^{k}\right)(t)=\int_{0}^{t}f_{k}(t-\tau)v^{k}(\tau)d\tau,
\]
from which it follows, using again part \ref{L3} of Lemma
\ref{SimpleLaplaceLemma} that
\begin{equation}\label{5.3}
  \| h^{k}\|_{L^{\infty}(0,T)}\leq
  \| v^{k}\|_{L^{\infty}(0,T)}\int_{0}^{T}|f_{k}(\tau)|d\tau.
\end{equation}
By Lemma \ref{PositivityLemma}, $f_{k}(t)\geq0$ for all $t$. To obtain a bound for
$\int_0^T f_k(\tau)\,d\tau$, we first show that the function
$r_k(t) = \mathcal{L}^{-1}(2^k e^{-kb\sqrt{s}})$ is greater than or equal to $f_k(t)$ for all $t > 0$,
and then bound $\int_0^T r_k(\tau)\,d\tau$ instead. Indeed, we have
\begin{align*}
\mathcal{L}\left\{r_k(t) - f_k(t)\right\} &=
2^ke^{-kb\sqrt{s}} - \frac{2^k}{(e^{b\sqrt{s}} + e^{-b\sqrt{s}})^k}\\
&= \frac{2^k((1 + e^{-2b\sqrt{s}})^k - 1)}{(e^{b\sqrt{s}}+e^{-b\sqrt{s}})^k}\\
&= \sum_{j=1}^k {k\choose j}e^{-2jb\sqrt{s}}\sech^k(b\sqrt{s}).
\end{align*}
Note that in addition to $f_k(t) = \mathcal{L}^{-1}(\sech^k(b\sqrt{s}))$, $\mathcal{L}^{-1}(e^{-2jb\sqrt{s}})$ is also a positive function for $j=1,\ldots, k$; see \eqref{InversionFormula0}. Thus, $\mathcal{L}^{-1}(e^{-2jb\sqrt{s}}\sech^k(b\sqrt{s}))$ is a convolution of positive functions, and hence positive by part 1 of Lemma \ref{SimpleLaplaceLemma}. This implies $r_k(t) - f_k(t) \geq 0$, so we deduce that
\[
  \int_{0}^{T}f_{k}(\tau)\,d\tau \leq \int_0^T r_k(\tau)\,d \tau
= \mathcal{L}^{-1}\left(\frac{2^{k}e^{-kb\sqrt{s}}}{s}\right) = 2^k\, \textrm{erfc}
  \left(\frac{kb}{2\sqrt{T}}\right),
\]
where we expressed the second integral as an inverse Laplace transform using
Lemma \ref{SimpleLaplaceLemma}, part \ref{L4}, which we then evaluated using
\eqref{InversionFormula}.
Finally, we combine the bound above with (\ref{5.2}) and (\ref{5.3}) to obtain the
second estimate of Theorem \ref{Theorem2}, which concludes the
proof of this theorem.

\subsection{Proof of Theorem \ref{Theorem3} for DNWR}

For the case $a<b$, a linear estimate which does not depend on $T$ has been given in \cite{Mandal}. For the second part, we rewrite (\ref{hrecurrenceLaplace}) in the
form
\begin{equation}\label{hrec2}
  \hat{h}^{2k}(s)=\left(-\frac{1}{2}\right)^2G(s)\hat{h}^{2k-1}(s)
    =\frac{1}{4}G^{2}(s)\hat{h}^{2k-2}(s),
\end{equation}
and defining
$\hat{\phi}(s):=\frac{\sinh^{2}((b-a)\sqrt{s})}{\cosh^{2}(b\sqrt{s})}$,
$\hat{h}_{1}(s)=\frac{\cosh^{2}((b-a)\sqrt{s})}{\cosh^{2}(b\sqrt{s})}$
and $\hat{h}_{2}(s)= \frac{1}{\cosh^{2}(b\sqrt{s})}$, we can rewrite
\[
  G^{2}(s) = \frac{\sinh^{2}((b-a)\sqrt{s})}
   {\sinh^{2}(a\sqrt{s})\cosh^{2}(b\sqrt{s})}
  =\frac{1}{\sinh^{2}(a\sqrt{s})}\hat{\phi}(s)
  =\frac{1}{\sinh^{2}(a\sqrt{s})}\left(\hat{h}_1(s)-\hat{h}_2(s)\right).
\]
This motivates the definition of the new sequence
$\hat{\vartheta}^{2k}(s):=\sinh^{2k}(a\sqrt{s})\hat{h}^{2k}(s)$, which
from (\ref{hrec2}) satisfies the recurrence
\[
  \hat{\vartheta}^{2k}(s)=\frac{1}{4}\hat{\phi}(s)\hat{\vartheta}^{2k-2}(s).
\]
Now using part \ref{L3} of Lemma \ref{SimpleLaplaceLemma}, we obtain
the estimate
\begin{equation}
  \|\vartheta^{2k}\|_{L^{\infty}(0,T)}
  \leq\frac{1}{4}\|\vartheta^{2k-2}
  \|_{L^{\infty}(0,T)}\int_{0}^{T}|\phi(\tau)|d\tau,
\end{equation}
and using Lemma \ref{LimitLemma} leads to
\begin{eqnarray*}
  \int_{0}^{T}|\phi(\tau)|d\tau & \leq & 
  \int_{0}^{T}h_{1}(\tau)d\tau+\int_{0}^{T}h_{2}(\tau)d\tau\\
  &\leq& \lim_{s\rightarrow0+}
    \frac{\cosh^{2}((b-a)\sqrt{s})}{\cosh^{2}(b\sqrt{s})}
    +\lim_{s\rightarrow0+}\frac{1}{\cosh^{2}(b\sqrt{s})}=2.
\end{eqnarray*}
By induction, we therefore obtain
\begin{equation}\label{5.6}
  \|\vartheta^{2k}\|_{L^{\infty}(0,T)}
  \leq\frac{1}{2^{k}}\|\vartheta^{0}\|_{L^{\infty}(0,T)}
   =\frac{1}{2^{k}}\| h^{0}\|_{L^{\infty}(0,T)}.
\end{equation}
Now defining
$\varphi_{2k}(t)=\mathcal{L}^{-1}\left(\frac{1}{\sinh^{2k}(a\sqrt{s})}\right)$,
we have
\[
  h^{2k}(t)=\left(\varphi_{2k}*\vartheta^{2k}\right)(t)
  =\int_{0}^{t}\varphi_{2k}(t-\tau)\vartheta^{2k}(\tau)d\tau,
\]
from which it follows by part \ref{L3} of Lemma
\ref{SimpleLaplaceLemma} and using (\ref{5.6}) that
\begin{equation}\label{5.7}
  \| h^{2k}\|_{L^{\infty}(0,T)}\leq 
  \|\vartheta^{2k}\|_{L^{\infty}(0,T)}
  \int_{0}^{T}|\varphi_{2k}(\tau)|d\tau
  \leq B(k,T)\| h^{0}\|_{L^{\infty}(0,T)},
\end{equation}
where $B(k,T):=\frac{1}{2^{k}}\int_{0}^{T}|\varphi_{2k}(\tau)|d\tau$.
By Lemma \ref{KernelLemma}, $\varphi_{2k}(t)\geq0$ for all $t>0$.
Thus by part \ref{L4} of Lemma \ref{SimpleLaplaceLemma}, and then
using equation (\ref{Kernel2}) from Lemma \ref{KernelLemma}, we get
\begin{eqnarray*}
  B(k,T) & = & \frac{1}{2^{k}}\mathcal{L}^{-1}\left(
   \frac{\textrm{cosech}^{2k}(a\sqrt{s})}{s}\right)_{t=T}\\
   &=&2^{k}\sum_{m=0}^{\infty}\binom{m+2k-1}{m}\textrm{erfc}
   \left(\frac{(m+k)a}{\sqrt{T}}\right)\\
   & \leq & 2^{k}\sum_{m=0}^{\infty}\binom{m+2k-1}{m}
     \exp\left(-\frac{(m+k)^{2}a^{2}}{T}\right)\\
   & = & 2^{k}e^{-k^{2}a^2/T}\sum_{m=0}^{\infty}\binom{m+2k-1}{m}
     \exp\left(-\frac{(m^2+2km)a^{2}}{T}\right)\\
   & \leq & 2^{k}e^{-k^{2}a^2/T}\sum_{m=0}^{\infty}\binom{m+2k-1}{m}
     \exp\left(-\frac{m(2k+1)a^{2}}{T}\right)\\
   & \leq & \left(\frac{\sqrt{2}}{1-e^{-\frac{2k+1}{\sigma}}}\right)^{2k}
    e^{-k^{2}/\sigma},\quad \textrm{with $\sigma:=T/a^{2}$},
\end{eqnarray*}
where we used for the first inequality the estimate
\[
  \textrm{erfc}(x)=\frac{2}{\sqrt{\pi}}\int_{x}^{\infty}e^{-t^{2}}dt
   =\frac{2}{\sqrt{\pi}}\int_{0}^{\infty}e^{-(x+t)^{2}}dt
   \leq\frac{2}{\sqrt{\pi}}e^{-x^{2}}\int_{0}^{\infty}e^{-t^{2}}dt
   \leq e^{-x^{2}},
\]
and for the third inequality
\[
   \frac{1}{(1-z)^{\beta}}=\sum_{m\geq0}\binom{m+\beta-1}{m}z^{m},
   \quad\mbox{for $|z|<1$}.
\]
Inserting the estimate for $B(k,T)$ into (\ref{5.7}) gives then the
result of the theorem.

\begin{remark}
Note that the factor multiplying $e^{-k^2a^2/T}$ in the estimate \eqref{eq2.15} is an increasing function of $k$ in general, since
$\frac{\sqrt{2}}{1-e^{-\frac{2k+1}{\sigma}}}>1$. Thus, the bound \eqref{eq2.15}
may increase initially for small iteration numbers $k$, before the factor $e^{-k^{2}a^2/T}$ starts dominating and causing the bound to decrease to zero superlinearly. To estimate the turning point, let us fix an integer $l > 0$ and consider the behavior of the algorithm for iteration numbers $k > 2l$. Then by writing $\alpha = e^{-l/\sigma}$, we can bound $B(k,T)$ by
\begin{align*}
  B(k,T)&=\left(\frac{\sqrt{2}}
   {1-e^{-\frac{2k+1}{\sigma}}}\right)^{2k}e^{-2kl/\sigma} e^{-k(k-2l)/\sigma}\\
   &\leq \left(\frac{\sqrt{2}e^{-l/\sigma}}
   {1-e^{-\frac{2k}{\sigma}}}\right)^{2k}e^{-k(k-2l)/\sigma}
   \leq {\underbrace{\left(\frac{\sqrt{2}\alpha}{1-\alpha^{4}}
   \right)}_{=(*)}}^{2k}e^{-\frac{(k-2l)^{2}}{\sigma}}.
\end{align*}
Thus, if $\sqrt{2}\alpha/(1-\alpha^4) < 1$, then the factor ($*$) is less than 1 and the bound $B(k,T)$ contracts superlinearly for $k > 2l$. This is true whenever $\alpha < \alpha_0$, where $\alpha_0 \approx 0.6095$ is the unique positive root of $\psi(\alpha)=\alpha^{4}+\sqrt{2}\alpha-1$.  
Hence, we get superlinear convergence for $k>2l>0.99T/a^{2}.$ 

\end{remark}



\subsection{Proof of Theorem \ref{Theorem4} for NNWR}

We start by applying the Laplace transform to the homogeneous 
Dirichlet subproblems in (\ref{NNWRL1}), and obtain  
\[
  s\hat{u}_{i}-\hat{u}_{i,xx}=0,\quad 
  \hat{u}_{i}(x_{i-1},s)=\hat{w}_{i-1}(s),\quad
  \hat{u}_{i}(x_{i},s)=\hat{w}_{i}(s),
\]
for $i=2,...,N-1.$ These subdomain problems have the solutions
\[
  \hat{u}_{i}(x,s)=\frac{1}{\sinh(h_{i}\sqrt{s})}
   \left(\hat{w}_{i}(s)\sinh\left((x-x_{i-1})\sqrt{s}\right)
         +\hat{w}_{i-1}(s)\sinh\left((x_{i}-x)\sqrt{s}\right)\right).
\]
Next we apply the Laplace transform to the Neumann subproblems
(\ref{NNWRN}) for subdomains not touching the physical boundary, and
obtain
\[
  \hat{\psi}_{i}(x,s)=C_{i}(s)\cosh\left((x-x_{i-1})\sqrt{s}\right)
    +D_{i}(s)\cosh\left((x_{i}-x)\sqrt{s}\right),
\]
where the notation $\sigma_{i}:=\sinh\left(h_{i}\sqrt{s}\right)$
and $\gamma_{i}:=\cosh\left(h_{i}\sqrt{s}\right)$ gives
\begin{eqnarray*}
  C_{i} & = & \frac{1}{\sigma_{i}}\left(\hat{w}_{i}
  \left(\frac{\gamma_{i}}{\sigma_{i}}+\frac{\gamma_{i+1}}{\sigma_{i+1}}\right)
  -\frac{\hat{w}_{i-1}}{\sigma_{i}}-\frac{\hat{w}_{i+1}}{\sigma_{i+1}}\right),\\
  D_{i} & = & \frac{1}{\sigma_{i}}\left(\hat{w}_{i-1}
  \left(\frac{\gamma_{i}}{\sigma_{i}}+\frac{\gamma_{i-1}}{\sigma_{i-1}}\right)
  -\frac{\hat{w}_{i-2}}{\sigma_{i-1}}-\frac{\hat{w}_{i}}{\sigma_{i}}\right).
\end{eqnarray*}
We therefore obtain for $i=2,...,N-2,$ at iteration $k$
\begin{eqnarray*}
  \hat{w}_{i}^{k}(s) & = & \hat{w}_{i}^{k-1}(s)-\theta
    \left(\hat{\psi}_{i}^{k}(x_{i},s)+\hat{\psi}_{i+1}^{k}(x_{i},s)\right)\\
  &=& \hat{w}_{i}^{k-1}(s)-\theta\left( 
    C_{i}\gamma_{i}+D_{i}+C_{i+1}+D_{i+1}\gamma_{i+1}\right).
\end{eqnarray*}
Using the identity $\gamma_{i}^{2}-1=\sigma_{i}^{2}$ and simplifying,
we get
\begin{equation}\label{5.9}
\begin{array}{rcl}
  \hat{w}_{i}^{k}&=&\displaystyle\hat{w}_{i}^{k-1}-\theta\left(\hat{w}_{i}^{k-1}
    \left(2+\frac{2\gamma_{i}\gamma_{i+1}}{\sigma_{i}\sigma_{i+1}}\right)
    +\frac{\hat{w}_{i+1}^{k-1}}{\sigma_{i+1}}
    \left(\frac{\gamma_{i+2}}{\sigma_{i+2}}-\frac{\gamma_{i}}{\sigma_{i}}\right)
   \right.\\
  &&\hspace{4em}\displaystyle\left.+\frac{\hat{w}_{i-1}^{k-1}}{\sigma_{i}}
 \left(\frac{\gamma_{i-1}}{\sigma_{i-1}}-\frac{\gamma_{i+1}}{\sigma_{i+1}}\right)
  -\frac{\hat{w}_{i+2}^{k-1}}{\sigma_{i+1}\sigma_{i+2}}
  -\frac{\hat{w}_{i-2}^{k-1}}{\sigma_{i}\sigma_{i-1}}\right).
\end{array}
\end{equation}
For $i=1$ and $i=N$, the Neumann conditions on the physical
boundary are replaced by homogeneous Dirichlet conditions $\psi_{1}(0,t)=0$ and $\psi_{N}(L,t)=0$, $t>0$. For these two subdomains, we obtain 
as solution after a Laplace transform
\begin{eqnarray*}
  \hat{\psi}_{1}(x,s)&=&
    \frac{1}{\gamma_{1}}\left(\hat{w}_{1}\left(
      \frac{\gamma_{1}}{\sigma_{1}}+\frac{\gamma_{2}}{\sigma_{2}}\right)
      -\frac{\hat{w}_{2}}{\sigma_{2}}\right)
      \sinh\left((x-x_{0})\sqrt{s}\right),\\
  \hat{\psi}_{N}(x,s)&=&\frac{1}{\gamma_{N}}\left(\hat{w}_{N-1}\left(
    \frac{\gamma_{N-1}}{\sigma_{N-1}}+\frac{\gamma_{N}}{\sigma_{N}}\right)
    -\frac{\hat{w}_{N-2}}{\sigma_{N-1}}\right)\sinh\left((x_{N}-x)\sqrt{s}
    \right),
\end{eqnarray*}
and thus the recurrence relations on the first interface is
\begin{equation}\label{5.10}
  \hat{w}_{1}^{k}=\hat{w}_{1}^{k-1}-\theta\left(\hat{w}_{1}^{k-1}
    \left(2+\frac{\gamma_{1}\gamma_{2}}{\sigma_{1}\sigma_{2}}
     +\frac{\sigma_{1}\gamma_{2}}{\gamma_{1}\sigma_{2}}\right)
    +\frac{\hat{w}_{2}^{k-1}}{\sigma_{2}}\left(
      \frac{\gamma_{3}}{\sigma_{3}}-\frac{\sigma_{1}}{\gamma_{1}}\right)
      -\frac{\hat{w}_{3}^{k-1}}{\sigma_{2}\sigma_{3}}\right),
\end{equation}
and on the last interface, we obtain
\begin{equation}\label{5.11}
  \begin{array}{rcl}
   \hat{w}_{N-1}^{k}&=&\displaystyle\hat{w}_{N-1}^{k-1}-\theta\left(
     \hat{w}_{N-1}^{k-1}\left(2+\frac{\gamma_{N-1}\gamma_{N}}{\sigma_{N-1}\sigma_{N}}
     +\frac{\sigma_{N}\gamma_{N-1}}{\gamma_{N}\sigma_{N-1}}\right)\right.\\
     &&\displaystyle\left.\hspace{5em}
     +\frac{\hat{w}_{N-2}^{k-1}}{\sigma_{N-1}}\left(
     \frac{\gamma_{N-2}}{\sigma_{N-2}}-\frac{\sigma_{N}}{\gamma_{N}}\right)
     -\frac{\hat{w}_{N-3}^{k-1}}{\sigma_{N-1}\sigma_{N-2}}\right).
  \end{array}
\end{equation}
Defining $\sigma:=\sinh(h_{\min}\sqrt{s})$ where $h_{\min}=\min_{1\leq
  i\leq N}h_{i}$, and setting
\begin{equation}\label{nusigmaw}
  \hat{\nu}_{i}^{k}(s):=\sigma^{2k}\hat{w}_{i}^{k}(s),
\end{equation}
relation (\ref{5.9}) reduces for the special choice $\theta=1/4$ to
\begin{equation}
  \hat{\nu}_{i}^{k}(s)=-\frac{1}{4}\left(
    \hat{t}_{i,i}\hat{\nu}_{i}^{k-1}(s)+\hat{t}_{i,i+1}\hat{\nu}_{i+1}^{k-1}(s)
    +\hat{t}_{i,i-1}\hat{\nu}_{i-1}^{k-1}(s)-\hat{t}_{i,i+2}\hat{\nu}_{i+2}^{k-1}(s)
    -\hat{t}_{i,i-2}\hat{\nu}_{i-2}^{k-1}(s)\right),
\end{equation}
where we defined
$\hat{t}_{i,i}:=\frac{2\sigma^{2}}{\sigma_{i}\sigma_{i+1}}(\gamma_{i}\gamma_{i+1}-\sigma_{i}\sigma_{i+1})$,
$\hat{t}_{i,i+1}:=\frac{\sigma^{2}}{\sigma_{i}\sigma_{i+1}\sigma_{i+2}}(\sigma_{i}\gamma_{i+2}-\gamma_{i}\sigma_{i+2})$,
$\hat{t}_{i,i-1}:=\frac{\sigma^{2}}{\sigma_{i}\sigma_{i-1}\sigma_{i+1}}(\sigma_{i+1}\gamma_{i-1}-\gamma_{i+1}\sigma_{i-1})$,
$\hat{t}_{i,i+2}:=\frac{\sigma^{2}}{\sigma_{i+1}\sigma_{i+2}}$, and
$\hat{t}_{i,i-2}:=\frac{\sigma^{2}}{\sigma_{i}\sigma_{i-1}}$. Similarly,
we obtain for (\ref{5.10})
\begin{equation}\label{LeftBE}
  \hat{\nu}_{1}^{k}(s)=-\frac{1}{4}\left(\hat{t}_{1,1}\hat{\nu}_{1}^{k-1}(s)
    +\hat{t}_{1,2}\hat{\nu}_{2}^{k-1}(s)-\hat{t}_{1,3}\hat{\nu}_{3}^{k-1}(s)\right),
\end{equation}
where we defined
$\hat{t}_{1,1}:=\sigma^{2}\left(\frac{\sigma_{1}\gamma_{2}}{\gamma_{1}\sigma_{2}}+\frac{\gamma_{1}\gamma_{2}}{\sigma_{1}\sigma_{2}}-2\right)$,
$\hat{t}_{1,2}=\frac{\sigma^{2}}{\sigma_{2}}\left(\frac{\gamma_{3}}{\sigma_{3}}-\frac{\sigma_{1}}{\gamma_{1}}\right)$
and $\hat{t}_{1,3}=\frac{\sigma^{2}}{\sigma_{2}\sigma_{3}}$. From (\ref{5.11}), we obtain
\begin{equation}\label{RightBE}
  \hat{\nu}_{N-1}^{k}(s)=-\frac{1}{4}\left(\hat{t}_{N-1,N-1}\hat{\nu}_{N-1}^{k-1}(s)
    +\hat{t}_{N-1,N-2}\hat{\nu}_{N-2}^{k-1}(s)-\hat{t}_{N-1,N-3}\hat{\nu}_{N-3}^{k-1}(s)\right),
\end{equation}
where we defined
$\hat{t}_{N-1,N-1}:=\sigma^{2}\left(\frac{\sigma_{N-1}\gamma_{N-2}}{\gamma_{N-1}\sigma_{N-2}}+\frac{\gamma_{N-1}\gamma_{N-2}}{\sigma_{N-1}\sigma_{N-2}}-2\right)$,
$\hat{t}_{N-1,N-2}=\frac{\sigma^{2}}{\sigma_{N-2}}\left(\frac{\gamma_{N-3}}{\sigma_{N-3}}-\frac{\sigma_{N-1}}{\gamma_{N-1}}\right)$
and $\hat{t}_{N-1,N-3}=\frac{\sigma^{2}}{\sigma_{N-2}\sigma_{N-3}}$.
Now we will show that for all $i$
\begin{equation}\label{nubound}
  \|\nu_{i}^{k}\|_{L^{\infty}(0,T)}\leq
    \frac{3}{2}\max_{1\leq j\leq N-1}
    \|\nu_{j}^{k-1}\|_{L^{\infty}(0,T)}, \quad k=1,2,3,\ldots.
\end{equation}
At first glance it seems that inequality (\ref{nubound}) is not enough
to prove convergence, because the factor $3/2$ is bigger than one, but
the $\nu_i^k$ are related to the $w_{i}^k$ by $\sigma$, see
(\ref{nusigmaw}), and the superlinear convergence of $w_i^k$ will come
from $\sigma$. To obtain (\ref{nubound}), we have to bound
$\int_{0}^{\infty}|t_{i,j}(t)|dt$, where
$t_{i,j}=\mathcal{L}^{-1}\left\{ \hat{t}_{i,j}\right\}$. We first
consider $\hat{t}_{i,i+2}$. If $h_{i+1}=h_{i+2}=h_{\min}$, then the
terms in $\hat{t}_{i,i+2}$ cancel and we simply get
$\hat{t}_{i,i+2}=1$. If $h_{\min}\leq h_{i+1}$ and $h_{\min}\leq
h_{i+2}$, then the kernel $t_{i,i+2}$, being a convolution of
two positive functions, is positive by part \ref{L1} of Lemma
\ref{SimpleLaplaceLemma}, and using Lemma \ref{LimitLemma} its
integral is bounded by
\[
  \int_{0}^{\infty}|t_{i,i+2}(t)|dt\le
  \lim_{s\rightarrow0+}\frac{\sinh^{2}(h_{\min}\sqrt{s})}
      {\sinh(h_{i+1}\sqrt{s})\sinh(h_{i+2}\sqrt{s})}
      \leq\frac{h_{\min}^{2}}{h_{i+1}h_{i+2}}\leq1,
\]
so that
\[
  \|\mathcal{L}^{-1}\left(
  \hat{t}_{i,i+2}\hat{\nu}_{i+2}^{k}(s)\right)
   \|_{L^{\infty}(0,T)}\leq\|\nu_{i+2}^{k}\|_{L^{\infty}(0,T)}.
\]
The same argument also holds for the term involving
$\hat{\nu}_{i-2}^{k-1}$.  Now for $\hat{\nu}_{i+1}^{k-1}$, we rewrite
$\hat{t}_{i,i+1}$ as
\[
  \hat{t}_{i,i+1}=\frac{\sinh\left((h_{i}-h_{i+2})\sqrt{s}\right)
    \sinh^{2}(h_{\min}\sqrt{s})}
    {\sinh(h_{i}\sqrt{s})\sinh(h_{i+1}\sqrt{s})\sinh(h_{i+2}\sqrt{s})}.
\]
Assuming that $h_{i}\geq h_{i+2}$, we use Lemma \ref{PositivityLemma}
and Lemma \ref{LimitLemma} to get a bound of the form
\[
  \int_{0}^{\infty}|t_{i,i+1}(t)|dt\le 
  \frac{|h_{i}-h_{i+2}|h_{\min}^{2}}{h_{i}h_{i+1}h_{i+2}}<1,
\]
and similarly for the term involving $\hat{\nu}_{i-1}^{k-1}$. Finally,
for the term $\hat{t}_{i,i}$, we use the trigonometric identity
$\sinh(A)\cosh(B)=\frac{1}{2}\left(\sinh(A+B)+\sinh(A-B)\right)$
to obtain
\begin{eqnarray*}
  \hat{t}_{i,i}&=&\frac{\sinh(h_{\min}\sqrt{s})\sinh\left(
    (h_{\min}+h_{i}-h_{i+1})\sqrt{s}\right)}
    {\sinh(h_{i}\sqrt{s})\sinh(h_{i+1}\sqrt{s})}\\
  &+&\frac{\sinh(h_{\min}\sqrt{s})\sinh\left( 
    (h_{\min}+h_{i+1}-h_{i})\sqrt{s}\right)}
    {\sinh(h_{i}\sqrt{s})\sinh(h_{i+1}\sqrt{s})}.
\end{eqnarray*}
Each term is again a ratio of hyperbolic sines, so we only need to
pair the factors so that the coefficient in the numerator is always
smaller than the one in the denominator. Now $-h_{i+1}\leq
h_{\min}+h_{i}-h_{i+1}=h_{i}+h_{\min}-h_{i+1}\leq h_{i}$, so for the
first term, we choose the pairing
\[
  \frac{\sinh(h_{\min}\sqrt{s})}{\sinh(h_{i+1}\sqrt{s})}\cdot
  \frac{\sinh\left((h_{\min}+h_{i}-h_{i+1})\sqrt{s}\right)}
   {\sinh(h_{i}\sqrt{s})}\quad\textrm{if}\; h_{i+1}\leq h_{i},
\]
and
\[
  \frac{\sinh(h_{\min}\sqrt{s})}{\sinh(h_{i}\sqrt{s})}\cdot
  \frac{\sinh\left((h_{\min}+h_{i}-h_{i+1})\sqrt{s}\right)}
  {\sinh(h_{i+1}\sqrt{s})}\quad\textrm{if}\; h_{i+1}\geq h_{i}.
\]
A similar argument holds also for the second term. Now using Lemma
\ref{LimitLemma}, we have again integrals of kernels bounded by 1. In
summary, we get for $2\leq i\leq N-2$
\begin{eqnarray*}
  \|\nu_{i}^{k}\|_{L^{\infty}(0,T)} & \leq & 
    \frac{1}{2}\|\nu_{i}^{k-1}\|_{L^{\infty}(0,T)}
   +\frac{1}{4}\left(\|\nu_{i-2}^{k-1}\|_{L^{\infty}(0,T)}
   +\|\nu_{i-1}^{k-1}\|_{L^{\infty}(0,T)}\right.\\
   &&\hspace{9em} +\left.\|\nu_{i+1}^{k-1}\|_{L^{\infty}(0,T)}
   +\|\nu_{i+2}^{k-1}\|_{L^{\infty}(0,T)}\right),
\end{eqnarray*}
and the estimate (\ref{nubound}) is established for interior
subdomains. For the left subdomain touching the boundary, 
the kernel $t_{1,3}$ in (\ref{LeftBE}) can 
be estimated like $t_{i,i+2}$. For $\hat{t}_{1,2}$, we
have 
\[
  \hat{t}_{1,2}=\frac{\cosh\left((h_{1}-h_{3})\sqrt{s}\right)
    \sinh^{2}(h_{\min}\sqrt{s})}
    {\cosh(h_{1}\sqrt{s})\sinh(h_{2}\sqrt{s})\sinh(h_{3}\sqrt{s})}.
\]
If $h_{1}\geq h_{3}$, the decomposition
$\hat{t}_{1,2}=\frac{\cosh\left((h_{1}-h_{3})\sqrt{s}\right)}
{\cosh(h_{1}\sqrt{s})}
\cdot\frac{\sinh(h_{\min}\sqrt{s})}{\sinh(h_{2}\sqrt{s})}
\cdot\frac{\sinh(h_{\min}\sqrt{s})}{\sinh(h_{3}\sqrt{s})}$ shows that
one can bound
\[
  \int_{0}^{\infty}|t_{1,2}(t)|dt\le
    h_{\min}^{2}/h_{2}h_{3}<1.
\]
If $h_{3}>h_{1}$, then we rewrite
\begin{eqnarray*}
  \hat{t}_{1,2}&=&\frac{1}{\cosh(h_{1}\sqrt{s})}\cdot
    \frac{\sinh(h_{\min}\sqrt{s})}{\sinh(h_{2}\sqrt{s})}\left(\frac{\sinh
      \left((h_{\min}+h_{1}-h_{3})\sqrt{s}\right)}
      {2\sinh(h_{3}\sqrt{s})}\right.\\
  &&\hspace{13em}\left.+\frac{\sinh\left((h_{\min}+h_{3}-h_{1})\sqrt{s}\right)}
   {2\sinh(h_{3}\sqrt{s})}\right),
\end{eqnarray*}
which again shows using Lemma \ref{LimitLemma} that the integral is
bounded by 1.  Finally we consider
\begin{eqnarray*}
\hat{t}_{1,1} & = & \frac{2\cosh\left((2h_{1}-h_{2})\sqrt{s}\right)\sinh^{2}(h_{\min}\sqrt{s})}{\sinh(2h_{1}\sqrt{s})\sinh(h_{2}\sqrt{s})}\\
 & = & \frac{\sinh(h_{\min}\sqrt{s})\sinh\left ((h_{\min}+2h_{1}-h_{2})\sqrt{s}\right) }{\sinh(2h_{1}\sqrt{s})\sinh(h_{2}\sqrt{s})}+\frac{\sinh(h_{\min}\sqrt{s})\sinh\left( (h_{\min}-2h_{1}+h_{2})\sqrt{s}\right) }{\sinh(2h_{1}\sqrt{s})\sinh(h_{2}\sqrt{s})}.
\end{eqnarray*}
Using the inequalities $-h_{2}\leq h_{\min}+2h_{1}-h_{2}\leq2h_{1}$
and $-2h_{1}\leq h_{\min}-2h_{1}+h_{2}\leq h_{2}$ we can again, with an
appropriate pairing of factors, bound the integral of each term
by 1. Thus we have for the subdomain touching the left physical boundary
\[
\|\nu_{1}^{k}\|_{L^{\infty}(0,T)}\leq\frac{1}{2}\|\nu_{1}^{k-1}\|_{L^{\infty}(0,T)}+\frac{1}{4}\left(\|\nu_{2}^{k-1}\|_{L^{\infty}(0,T)}+\|\nu_{3}^{k-1}\|_{L^{\infty}(0,T)}\right).
\]
A similar result holds for $\hat{\nu}_{N-1}^{k}(s)$, and hence the
inequality (\ref{nubound}) holds for all $1\leq i\leq N-1$. Therefore,
by induction, we obtain 
\[
  \max_{1\leq j\leq N-1}\|\nu_{j}^{k}\|_{L^{\infty}(0,T)}
    \leq\left(\frac{3}{2}\right)^{k}\max_{1\leq j\leq N-1}
    \|\nu_{j}^{0}\|_{L^{\infty}(0,T)}
   =\left(\frac{3}{2}\right)^{k}\max_{1\leq j\leq N-1}
   \| w_{j}^{0}\|_{L^{\infty}(0,T)}.
\]
Now since
\[
w_{i}^{k}(t)=\left(\phi_{2k}*\nu_{i}^{k}\right)(t)=\int_{0}^{t}\phi_{2k}(t-\tau)\nu_{i}^{k}(\tau)d\tau,
\]
with $\phi_{2k}=\mathcal{L}^{-1}
\left(\frac{1}{\sinh^{2k}(h_{\min}\sqrt{s})}\right)$,
a similar estimate using part \ref{L3} of Lemma
\ref{SimpleLaplaceLemma} as in the proof of Theorem \ref{Theorem3}
leads to the superlinear convergence estimate (\ref{Theorem4Result}).

\section{Analysis of the NNWR in 2D}\label{Section6}

In this section we formulate and analyze the NNWR algorithm, applied
to the two-dimensional heat equation
\[
  \partial_t u-\Delta u=f(x,y,t),\quad
    (x,y)\in\Omega=(l,L)\times(0,\pi),\: t\in(0,T]
\]
with initial condition $u(x,y,0)=u_0(x,y)$ and Dirichlet boundary conditions. To define the Neumann-Neumann
algorithm, we decompose $\Omega$ into strips of the form
$\Omega_{i}=(x_{i-1},x_{i})\times(0,\pi)$,
$l=x_{0}<x_{1}<\cdots<x_{N}=L$. The Neumann-Neumann algorithm,
considering directly the error equations with $f(x,y,t)=0,u_0(x,y)=0$ and
homogeneous Dirichlet boundary conditions, is then given by performing
iteratively for $k=1,2,\ldots$ and for $i=1,\ldots,N$ the Dirichlet
and Neumann steps
\begin{equation}\label{NNWR2D}
  \arraycolsep0.2em
  \begin{array}{rclrcl}
  \partial_t u_{i}^{k}-\Delta u_{i}^{k}&=&0,
    \qquad \textrm{in $\Omega_{i}$}, & 
    \partial_t\psi_{i}^k-\Delta\psi_{i}^k & = & 
    0,\qquad\textrm{in $\Omega_{i}$},\\
    u_{i}^{k}(x,y,0)&=&0, & 
    \psi_{i}^k(x,y,0) & = & 0,\\
    u_{i}^k(x_{i-1},y,t) & = & g_{i-1}^{k-1}(y,t),&
    \partial_{n_i}\psi_{i}^k(x_{i-1},y,t) & = & 
      (\partial_{n_{i-1}} u_{i-1}^k
      +\partial_{n_i} u_{i}^k)(x_{i-1},y,t),\\
   u_{i}^k(x_{i},y,t) & = & g_{i}^{k-1}(y,t),&
    \partial_{n_i}\psi_{i}^k(x_{i},y,t)&=&
     (\partial_{n_{i-1}} u_{i-1}^k
     +\partial_{n_i} u_{i}^k)(x_{i},y,t),\\
   u_{i}^k(x,0,t)&=&u_{i}^k(x,\pi,t)=0,&\psi_{i}^k(x,0,t)&=&\psi_{i}^k(x,\pi,t)=0,
  \end{array}
\end{equation}
except for the first and last subdomain, where in the Neumann step the
Neumann conditions are replaced by homogeneous Dirichlet conditions
along the physical boundaries, as in the one dimensional case. The new
interface values for the next step are then defined as
\[
  g_{i}^{k}(y,t)=g_{i}^{k-1}(y,t)
   -\theta\left(\psi_{i}^k(x_{i},y,t)+\psi_{i+1}^k(x_{i},y,t)\right) .
\]
To analyze the NNWR algorithm (\ref{NNWR2D}) in two dimensions, we
first reduce the problem to a collection of one-dimensional problems
by performing a Fourier transform along the $y$ direction. 
More precisely, we use a Fourier sine series along the
$y$-direction,
\[
  u_{i}^k(x,y,t)=\sum_{n\geq1}U_{i}^k(x,n,t)\sin(ny)
\]
where
\[
  U_{i}^k(x,n,t)=\frac{2}{\pi}\int_{0}^{\pi}u_{i}^k(x,\eta,t)\sin(n\eta)d\eta.
\]
Thus, the 2D problems in the NNWR algorithm (\ref{NNWR2D}) become a
sequence of 1D problems indexed by $n$,
\begin{equation}\label{NNWRDFT}
  \frac{\partial U_{i}^k}{\partial t}(x,n,t)
    -\frac{\partial^{2}U_{i}^k}{\partial x^{2}}(x,n,t)+n^{2}U_{i}^k(x,n,t)=0,
\end{equation}
and the boundary conditions for $U_{i}^k(x,n,t)$ are identical to the
one-dimensional case for each $n$. 
\begin{theorem}\label{2DTheorem}
\textbf{(Convergence of NNWR in 2D)} Let $\theta=1/4$. For $T > 0$ fixed, the NNWR algorithm (\ref{NNWR2D}) converges 
superlinearly with the estimate
\[
  \max_{1\leq i\leq N-1}\| g_{i}^{k}
    \|_{L^{\infty}\left(0,T;L^{2}(0,\pi)\right)}
    \leq\left(\frac{\sqrt{6}}{1-e^{-\frac{(2k+1)h_{\min}^{2}}{T}}}
    \right)^{2k}e^{-k^{2}h_{\min}^{2}/T}
    \max_{1\leq i\leq N-1}\| g_{i}^{0}
    \|_{L^{\infty}\left(0,T;L^{2}(0,\pi)\right)},
\]
where $h_{\min}$ is the minimum subdomain width.
\end{theorem}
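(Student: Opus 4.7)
My plan is to reduce the 2D NNWR algorithm to a family of 1D problems by performing a Fourier sine series in the $y$ direction, then to leverage the analysis of Theorem \ref{Theorem4} mode by mode, and finally to combine the modes using Parseval's identity and Minkowski's integral inequality.

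Expanding $g_i^k(y,t) = \sum_{n\geq 1} G_i^k(n,t)\sin(ny)$ and similarly for $u_i^k$ and $\psi_i^k$, I will use the observation already made in (\ref{NNWRDFT}) that each Fourier coefficient $U_i^k(x,n,t)$ satisfies the shifted heat equation $\partial_t U_i^k - \partial_{xx} U_i^k + n^2 U_i^k = 0$, with interface and boundary conditions of the same 1D form as in (\ref{NNWRL1}). A Laplace transform in $t$ then yields $(s+n^2 - \partial_{xx})\hat{U}_i^k = 0$, which is exactly the 1D Laplace-transformed equation under the substitution $\sqrt{s}\mapsto\sqrt{s+n^2}$. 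Consequently the entire recurrence derived in the proof of Theorem \ref{Theorem4}, together with its kernels $\hat{t}_{i,j}$ and $1/\sinh^{2k}(h_{\min}\sqrt{s})$, carries over verbatim for $\hat{G}_i^k(n,\cdot)$, with every hyperbolic factor evaluated at $\sqrt{s+n^2}$. In the time domain this shift corresponds to multiplying each 1D kernel by $e^{-n^2 t}$, so writing $t_{i,j}^{(n)}(t)=e^{-n^2 t}t_{i,j}(t)$ and $\phi_{2k}^{(n)}(t)=e^{-n^2 t}\phi_{2k}(t)$, positivity is preserved and the pointwise domination $|t_{i,j}^{(n)}(t)|\leq |t_{i,j}(t)|$ and $\phi_{2k}^{(n)}(t)\leq\phi_{2k}(t)$ holds. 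In particular, all the $L^1$-in-$t$ kernel bounds used in the proof of Theorem \ref{Theorem4} remain valid uniformly in $n$.

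The key step is to convert these mode-wise estimates into an $L^\infty(0,T;L^2(0,\pi))$ bound. Setting $\tilde{\nu}_i^k(n,t):=\mathcal{L}^{-1}(\sinh^{2k}(h_{\min}\sqrt{s+n^2})\hat{G}_i^k(n,s))$, the Laplace-space analogue of (\ref{nusigmaw})--(\ref{RightBE}) gives, for each $n$,
\[
|\tilde{\nu}_i^k(n,t)|\leq\tfrac{1}{4}\sum_j\int_0^t|t_{i,j}^{(n)}(t-\tau)|\,|\tilde{\nu}_j^{k-1}(n,\tau)|\,d\tau.
\]
Taking $\ell^2$-norms in $n$, applying Parseval on $(0,\pi)$ and Minkowski's integral inequality, and then using the domination by the 1D kernels, I obtain
\[
\|\nu_i^k(\cdot,t)\|_{L^2(0,\pi)}\leq\tfrac{1}{4}\sum_j\int_0^t|t_{i,j}(t-\tau)|\,\|\nu_j^{k-1}(\cdot,\tau)\|_{L^2(0,\pi)}\,d\tau,
\]
which is precisely the 1D recurrence of Theorem \ref{Theorem4} with scalars $|\tilde{\nu}_j^{k-1}(t)|$ replaced by $L^2_y$-norms. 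The same kernel-counting argument then yields $\max_i\|\nu_i^k\|_{L^\infty(0,T;L^2(0,\pi))}\leq(3/2)^k\max_j\|G_j^0\|_{L^\infty(0,T;L^2(0,\pi))}$. An identical Minkowski argument applied to $G_i^k(n,\cdot)=\phi_{2k}^{(n)}*\tilde{\nu}_i^k(n,\cdot)$ reduces the remaining step to a scalar convolution against $\phi_{2k}$, and the estimate of $\int_0^T\phi_{2k}(\tau)\,d\tau$ from the proof of Theorem \ref{Theorem3} delivers the claimed superlinear factor $(\sqrt{6}/(1-e^{-(2k+1)h_{\min}^2/T}))^{2k}e^{-k^2 h_{\min}^2/T}$.

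I expect the main technical obstacle to be the rigorous justification of the interchange between the $\ell^2$-sum over Fourier modes and the time convolution, which requires Minkowski's integral inequality in tandem with the uniform pointwise domination of the shifted kernels by their 1D counterparts. Once these two points are in place, no new kernel estimate is needed and the result follows from the machinery of Theorems \ref{Theorem3} and \ref{Theorem4}.
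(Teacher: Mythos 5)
Your proof is correct and arrives at the same constants, but it combines the Fourier modes by a genuinely different mechanism than the paper. Both proofs begin identically: reduce to the shifted one-dimensional problems $(s+n^{2}-\partial_{xx})\hat U_i^k=0$ and observe that replacing $s$ by $s+n^{2}$ multiplies every 1D kernel by $e^{-n^{2}t}$ in the time domain. At that point the paper does \emph{not} estimate mode by mode; it resums the series $\frac{2}{\pi}\sum_{n\geq 1}e^{-n^{2}\tau}\sin(n\eta)\sin(ny)$ via the Poisson summation formula into a difference of Gaussians acting on the odd $2\pi$-periodic extension of $g_j^0$, and then uses $\int_{\mathbb{R}}(4\pi\tau)^{-1/2}e^{-(y-\zeta)^{2}/4\tau}\,d\zeta=1$ to conclude $\|g_i^k\|\leq\sum_j\|g_j^0\|\int_0^T|a_{ij}^{(k)}(\tau)|\,d\tau$ with $\|\cdot\|$ the $L^\infty$ norm in $(y,t)$. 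Your route --- the pointwise domination $|e^{-n^{2}t}t_{i,j}(t)|\leq|t_{i,j}(t)|$, Minkowski's integral inequality for $\ell^{2}$-valued convolutions, and Parseval --- produces the recurrence directly in $L^{2}(0,\pi)$, which is in fact the norm appearing in the theorem statement (the paper's argument as written establishes the $L^\infty(0,\pi)$ version). Your approach also sidesteps the Fubini justification the paper needs to exchange $\sum_n$ with the integrals, which rests on showing $|a_{ij}^{(k)}(\tau)|\leq C\tau^{2}$ near $\tau=0$; Minkowski's inequality only needs finiteness of the right-hand side, which follows from $g_j^0\in L^\infty(0,T;L^{2}(0,\pi))$ and the $L^{1}$ kernel bounds already proved for Theorem \ref{Theorem4}. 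What the Poisson-summation argument buys in exchange is the stronger pointwise-in-$y$ estimate. To finish your write-up you should make the constant bookkeeping explicit: the $(3/2)^{k}$ from the kernel-counting step combines with the bound $\int_0^T\phi_{2k}(\tau)\,d\tau\leq\bigl(2/(1-e^{-(2k+1)h_{\min}^{2}/T})\bigr)^{2k}e^{-k^{2}h_{\min}^{2}/T}$ from Lemma \ref{KernelLemma} (as at the end of the proof of Theorem \ref{Theorem3}, with $\alpha=h_{\min}$), and $\sqrt{3/2}\cdot 2=\sqrt{6}$ gives the stated factor.
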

\begin{proof}
We take Laplace transforms in $t$ of (\ref{NNWRDFT}) to get
\[
  (s+n^{2})\hat{U}_{i}^k-\frac{d^{2}\hat{U}_{i}^k}{dx^{2}}=0,
\]
and now treat each $n$ as in the one-dimensional analysis in the proof
of Theorem \ref{Theorem4}, where the recurrence relations (\ref{5.9}),
(\ref{5.10}) and (\ref{5.11}) of the form 
\[
  \hat{w}_{i}^{k}(s)=\sum_{j}A_{ij}^{(k)}(s)\hat{w}_{j}^{0}(s)
\]
now become for each $n=1,2,\ldots$
\begin{equation}\label{6.2}
  \hat{G}_{i}^{k}(n,s)=\sum_{j}A_{ij}^{(k)}(s+n^{2})\hat{G}_{j}^{0}(n,s).
\end{equation}
If $a_{ij}^{(k)}(t)$ is the inverse Laplace transform of
$A_{ij}^{(k)}(s)$, i.e.,
\begin{equation}\label{6.3}
A_{ij}^{(k)}(s)={\displaystyle \int_{0}^{\infty}}a_{ij}^{(k)}(t)e^{-st}dt,
\end{equation}
then if we replace $s$ by $s+n^{2}$ in (\ref{6.3}), we get
$A_{ij}^{(k)}(s+n^{2})=\int_{0}^{\infty}a_{ij}^{(k)}(t)e^{-n^{2}t}e^{-st}dt,$
so the inverse Laplace transform of $A_{ij}^{(k)}(s+n^{2})$ is just
$a_{ij}^{(k)}(t)e^{-n^{2}t}$. Hence taking the inverse
Laplace transform of (\ref{6.2}), we get
\[
  G_{i}^{k}(n,t)=\sum_{j} \int_{0}^{t}a_{ij}^{(k)}(\tau)
    e^{-n^{2}\tau}G_{j}^{0}(n,t-\tau)d\tau.
\]
So the interface functions $g_{i}^{k}(y,t)$ can be written as
\begin{eqnarray*}
g_{i}^{k}(y,t) & = & {\displaystyle \sum_{n\geq 1}}G_{i}^{k}(n,t)\sin(ny)\\
 & = & {\displaystyle \sum_{n\geq 1}}{\displaystyle \sum_{j}}{\displaystyle \int_{0}^{t}}a_{ij}^{(k)}(\tau)e^{-n^{2}\tau}\left(\frac{2}{\pi}{\displaystyle \int_{0}^{\pi}}g_{j}^{0}(\eta,t-\tau)\sin(n\eta)d\eta\right)\sin(ny)d\tau.
\end{eqnarray*}
Next, we justify the exchange of the infinite sum and the integrals using Fubini's theorem.
Here, we need to check that
$ \left|\sum_{n=1}^\infty a_{ij}^{(k)}(\tau) e^{-n^2\tau}\right| $
remains bounded for all $\tau \geq 0$. For $\tau$ bounded away from zero, this follows from the boundedness of $|a_{ij}^{(k)}|$ and from the geometric series, so it suffices to show boundedness
for $\tau$ close to zero. 
To do so, note that $A_{ij}^{(k)}(s)$ contains $1/\sinh^{2k}(h_{\min}\sqrt{s})$ as a factor,
which implies $\lim_{s\to\infty} s^p A_{ij}^{k}(s) = 0$ for all $p > 1$. This means $a_{ij}^{(k)}(\tau)$
is infinitely differentiable at $\tau = 0$ and its derivatives of all orders vanish there. Thus, by Taylor's theorem, there exists a constant $C$ such that $|a_{ij}^{(k)}(\tau)| \leq C\tau^2$ for $\tau > 0$ small enough, so we have 
\begin{equation}\label{eq6.6a}
\left|\sum_{n=1}^{M} a_{ij}^{(k)}(\tau)e^{-n^{2}\tau}\right|\leq 
\frac{C\tau^2}{1-e^{-\tau}}.
\end{equation}
In particular, for $0 < \tau < 1$, we have 
$ 1 - e^{-\tau} \geq \tau - \frac{\tau^2}{2} \geq \frac{\tau}{2}, $
so the sum \eqref{eq6.6a} is bounded above by $2C\tau$, 
which is independent of $M$.  Therefore, $ \left|\sum_{n=1}^\infty a_{ij}^{(k)}(\tau) e^{-n^2\tau}\right| $ is bounded uniformly for all $\tau \in (0,\infty)$, so we can apply
Fubini's theorem to interchange sums and integrals and get
\begin{equation}\label{6.4}
  g_{i}^{k}(y,t)= \sum_{j}\int_{0}^{t}a_{ij}^{(k)}(\tau)
    \int_{0}^{\pi}g_{j}^{0}(\eta,t-\tau)\left(\frac{2}{\pi}
    \sum_{n\geq1}e^{-n^{2}\tau}\sin(n\eta)\sin(ny)\right)d\eta d\tau.
\end{equation}
 We now use the trigonometric identity
 $\sin(A)\sin(B)=\frac{1}{2}\cos(A-B)-\frac{1}{2}\cos(A+B)$ to rewrite
 (\ref{6.4}) as
\begin{eqnarray*}
  \frac{2}{\pi}\sum_{n\geq1}e^{-n^2\tau}\sin(n\eta)\sin(ny)
  &=&\frac{1}{\pi} \sum_{n\geq 1}e^{-n^{2}\tau}\left(
    \cos\left(n(\eta-y)\right)-\cos\left(n(\eta+y)\right)\right)\\
  &=&\frac{1}{2\pi}\sum_{n\in\mathbb{Z}}e^{-n^{2}\tau}\left( 
    \exp\left(in(\eta-y)\right)-\exp\left(in(\eta+y)\right)\right) .
\end{eqnarray*}
Now we recall the following well-known properties of the Fourier transform
$\hat{f}(w)={\mathcal{F}}(f(x))(w):=\int_{-\infty}^{\infty}f(t)e^{-iwt}dt$:
\begin{enumerate}
\item \label{P1} if both $f$ and $\hat{f}$ are continuous and decay
  sufficiently rapidly, then $\sum_{n\in\mathbb{Z}}f(n)=
  \sum_{k\in\mathbb{Z}}\hat{f}(2k\pi)$ (Poisson summation formula),

\item \label{P2} $\mathcal{F}\left( f(x)e^{iw_{0}x}\right) =\hat{f}(w-w_{0})$,

\item \label{P3} $\mathcal{F}\left(e^{-x^{2}\tau}\right)=\sqrt{\frac{\pi}{\tau}}
   e^{-w^{2}/4\tau}$.
\end{enumerate}
Thus, using the properties \ref{P2} and \ref{P3}  and the Poisson summation formula, we obtain
\[
  \frac{2}{\pi}\sum_{n\geq1}e^{-n^{2}\tau}\sin(n\eta)\sin(ny)
    =\frac{1}{\sqrt{4\pi\tau}} \sum_{k\in\mathbb{Z}}
    \left(e^{-(2k\pi-\eta+y)^{2}/4\tau}-e^{-(2k\pi-\eta-y)^{2}/4\tau}\right).
\]
Interchanging the sum and the integral, (\ref{6.4}) gives
\begin{equation}\label{6.5}
  g_{i}^{k}(y,t)=\sum_{j}\int_{0}^{t}\frac{a_{ij}^{(k)}(\tau)}{\sqrt{4\pi\tau}}
  \left(\sum_{k\in\mathbb{Z}} \int_{0}^{\pi}g_{j}^{0}(\eta,t-\tau)
    \left(e^{-(y-(\eta-2k\pi))^{2}/4\tau}-e^{-(y+(\eta-2k\pi))^{2}/4\tau}\right)
    d\eta\right) d\tau.
\end{equation}
Now splitting the two integrals and performing the change of variables
$\zeta=\eta-2k\pi$ in the first integral and $\zeta=2k\pi-\eta$
in the second, (\ref{6.5}) gives
\[
  \sum_{k\in\mathbb{Z}}\int_{-2k\pi}^{(1-2k)\pi}
    g_{j}^{0}(\zeta+2k\pi,t-\tau)e^{-(y-\zeta)^{2}/4\tau}d\zeta
   -\sum_{k\in\mathbb{Z}} \int_{(2k-1)\pi}^{2k\pi}
    g_{j}^{0}(2k\pi-\zeta,t-\tau)e^{-(y-\zeta)^{2}/4\tau}d\zeta.
\]
Letting $m=-k$ in the first integral, we obtain
\[
  \sum_{m\in\mathbb{Z}}\int_{2m\pi}^{(2m+1)\pi}
    g_{j}^{0}(\zeta-2m\pi,t-\tau)e^{-(y-\zeta)^{2}/4\tau}d\zeta
    -\sum_{m\in\mathbb{Z}} \int_{(2m-1)\pi}^{2m\pi}
   g_{j}^{0}(2m\pi-\zeta,t-\tau)e^{-(y-\zeta)^{2}/4\tau}d\zeta.
\]
Defining the $2\pi$-periodic odd extension of $g_{j}^{0}$ as
\[
 \bar{g}_{j}^{0}(y,t)=\begin{cases}
   g_{j}^{0}(y-2m\pi,t), & 2m\pi<y<(2m+1)\pi,\\
    -g_{j}^{0}(2m\pi-y,t), & (2m-1)\pi<y<2m\pi\quad(m\in\mathbb{Z}),
  \end{cases}
\]
we can rewrite (\ref{6.5}) as
\begin{equation}
  g_{i}^{k}(y,t)=\sum_{j}\int_{0}^{t}\int_{-\infty}^{\infty}
    \frac{a_{ij}^{(k)}(\tau)}{\sqrt{4\pi\tau}}
    \bar{g}_{j}^{0}(\zeta,t-\tau)e^{-(y-\zeta)^{2}/4\tau}d\zeta d\tau.
\end{equation}
Now since $\bar{g}_{j}^{0}$
and $g_{j}^{0}$ have the same maxima and minima, we have 
\begin{equation}
  |g_{i}^{k}(y,t)|\leq \sum_{j}\| g_{j}^{0}\|
    \int_{0}^{t} \int_{-\infty}^{\infty}|a_{ij}^{(k)}(\tau)|
    \frac{1}{\sqrt{4\pi\tau}}e^{-(y-\zeta)^{2}/4\tau}d\zeta d\tau,
\end{equation}
where $\| g_{j}^{0}\|=\max_{0<y<\pi}
\max_{0<t<T}|g_{j}^{0}(y,t)|$ is the $L^{\infty}$ norm of the initial
guess. Also note
\[
  \int_{-\infty}^{\infty}\frac{1}{\sqrt{4\pi\tau}}
    e^{-(y-\zeta)^{2}/4\tau}d\zeta
    =\frac{1}{\sqrt{4\pi\tau}} \int_{-\infty}^{\infty}
    e^{-\zeta{}^{2}/4\tau}d\zeta=1,
\]
so that
\[
  \| g_{i}^{k}\|\leq \sum_{j}\|
  g_{j}^{0}\| \int_{0}^{t}|a_{ij}^{(k)}(\tau)|d\tau,
\]
which means we have the same bounds as in the 1D case.
\end{proof}

\section{Numerical Experiments}\label{Section7}

We perform experiments to measure the actual convergence rate of the
discretized DNWR and NNWR algorithms for the problem
\begin{equation}\label{NumericalModelProblem}
\begin{array}{rcll}
  \partial_t u-\frac{\partial}{\partial x}\left(\kappa(x)\partial_{x}u\right)&=&0, \qquad& x\in\Omega,\\
  u(x,0)&=&x(x+1)(x+3)(x-2)e^{-x}, & x\in\Omega,\\
  u(-3,t)=t,\;u(2,t)&=&te^{-t}, & t>0.
\end{array}
\end{equation}
We discretize (\ref{NumericalModelProblem}) using standard centered
finite differences in space and backward Euler in time on a grid with
$\Delta x=2\times 10^{-2}$ and $\Delta t=4\times 10^{-3}$. For the
DNWR method, we consider two cases: first, we choose $a=3$ and $b=2$,
i.e., we split the spatial domain $\Omega:=(-3,2)$ into two
non-overlapping subdomains $\Omega_1=(-3,0)$ and $\Omega_2=(0,2)$, see
Figure \ref{FigDecomp}.  This is the case of DNWR when the Dirichlet
subdomain is larger than the Neumann subdomain ($a>b$), corresponding
to Theorem \ref{Theorem2}. For the second case, we take $a=2$ and
$b=3$, so that the Dirichlet domain is smaller than the Neumann one,
as in Theorem \ref{Theorem3}. We test the algorithm by choosing
$h^{0}(t)=t^2, t\in[0,T]$ as an initial guess. Figures \ref{NumFig01}
and \ref{NumFig02}
\begin{figure}
  \centering
  \includegraphics[width=0.49\textwidth]{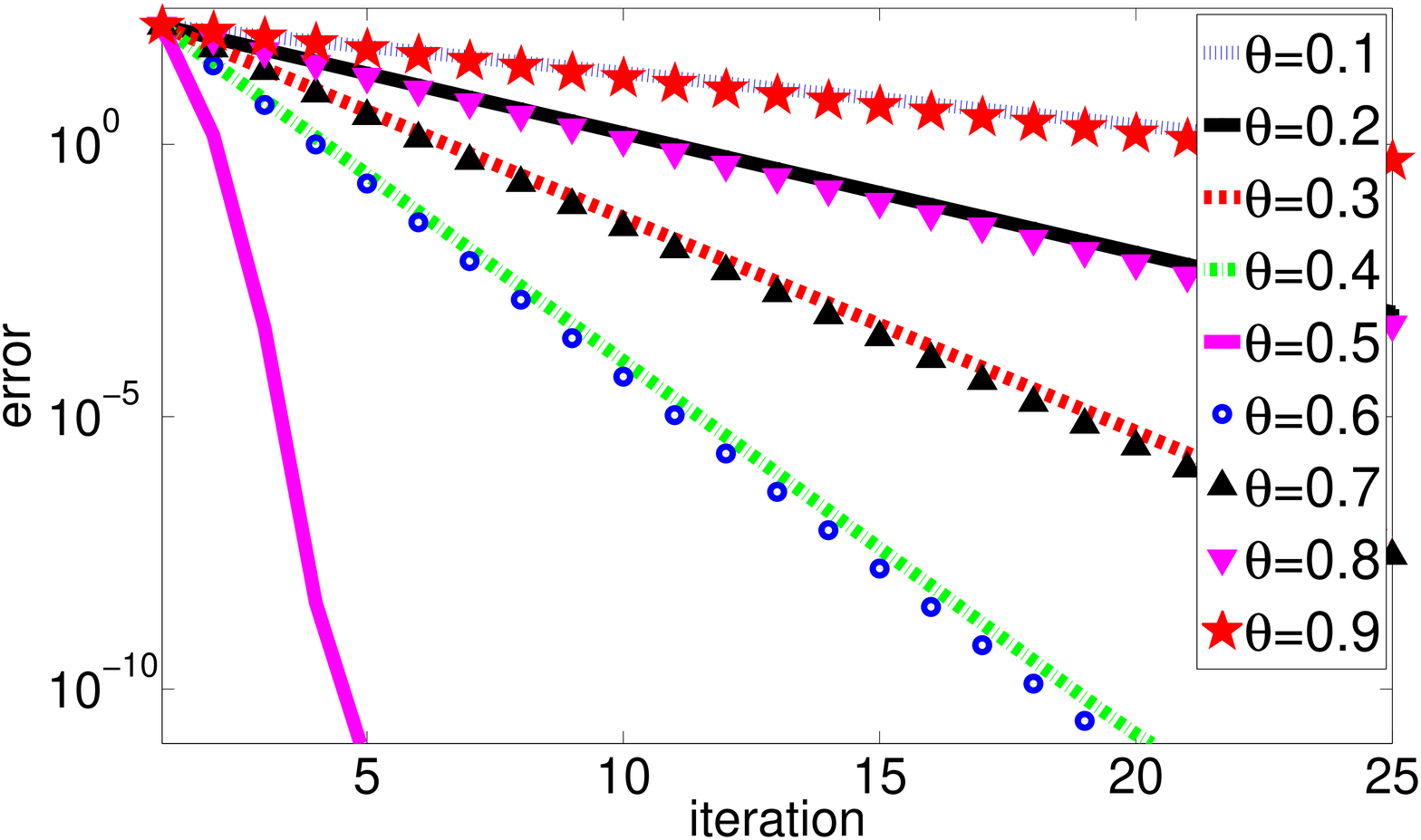}
  \includegraphics[width=0.49\textwidth]{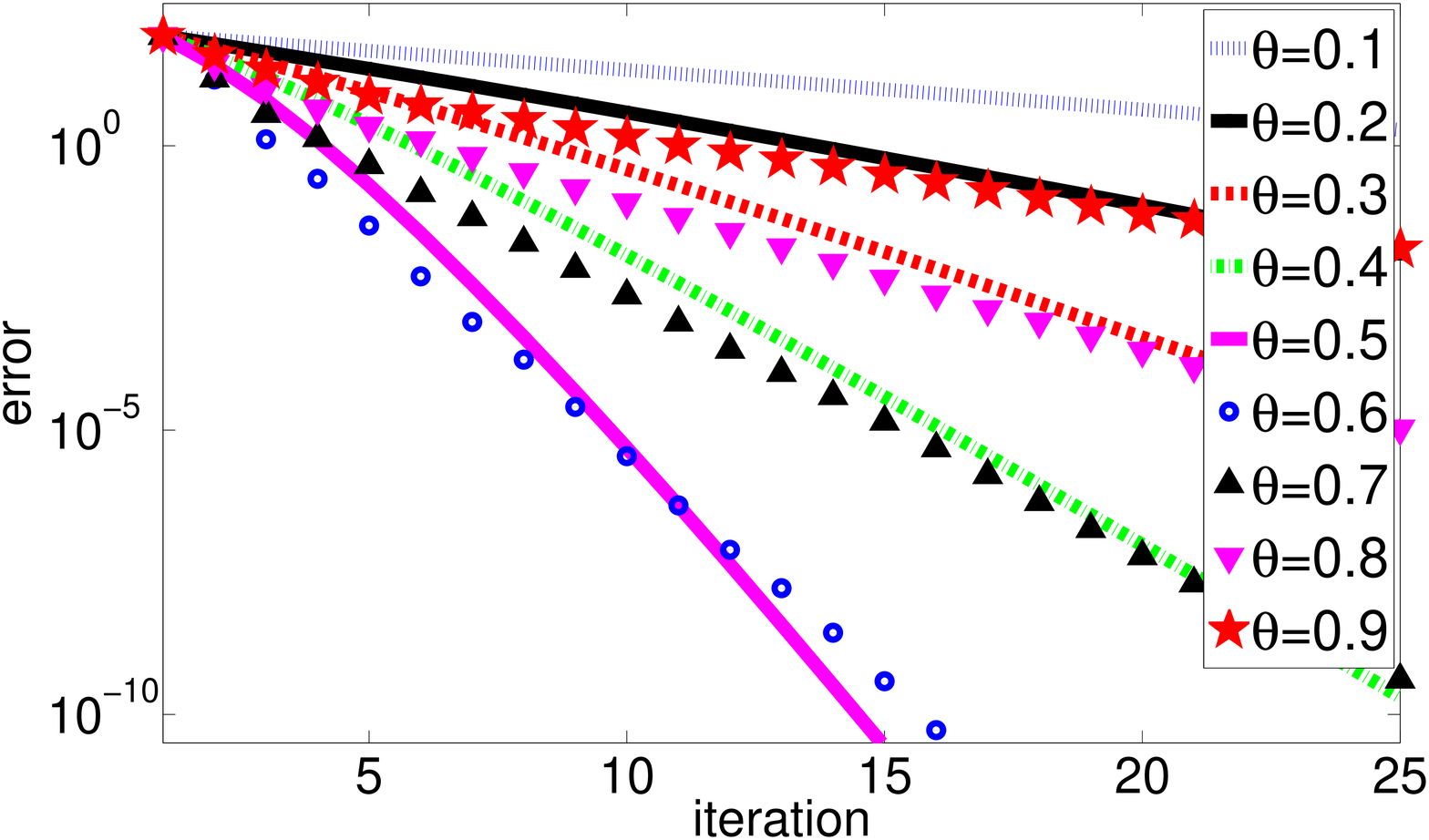}
  \caption{Convergence of DNWR for $a>b$ using various relaxation
    parameters $\theta$ for $T=2$, on the left for $\kappa(x)=1$ and on the right for
    $\kappa(x)=1+e^x$}
  \label{NumFig01}
\end{figure}
\begin{figure}
  \centering
  \includegraphics[width=0.49\textwidth]{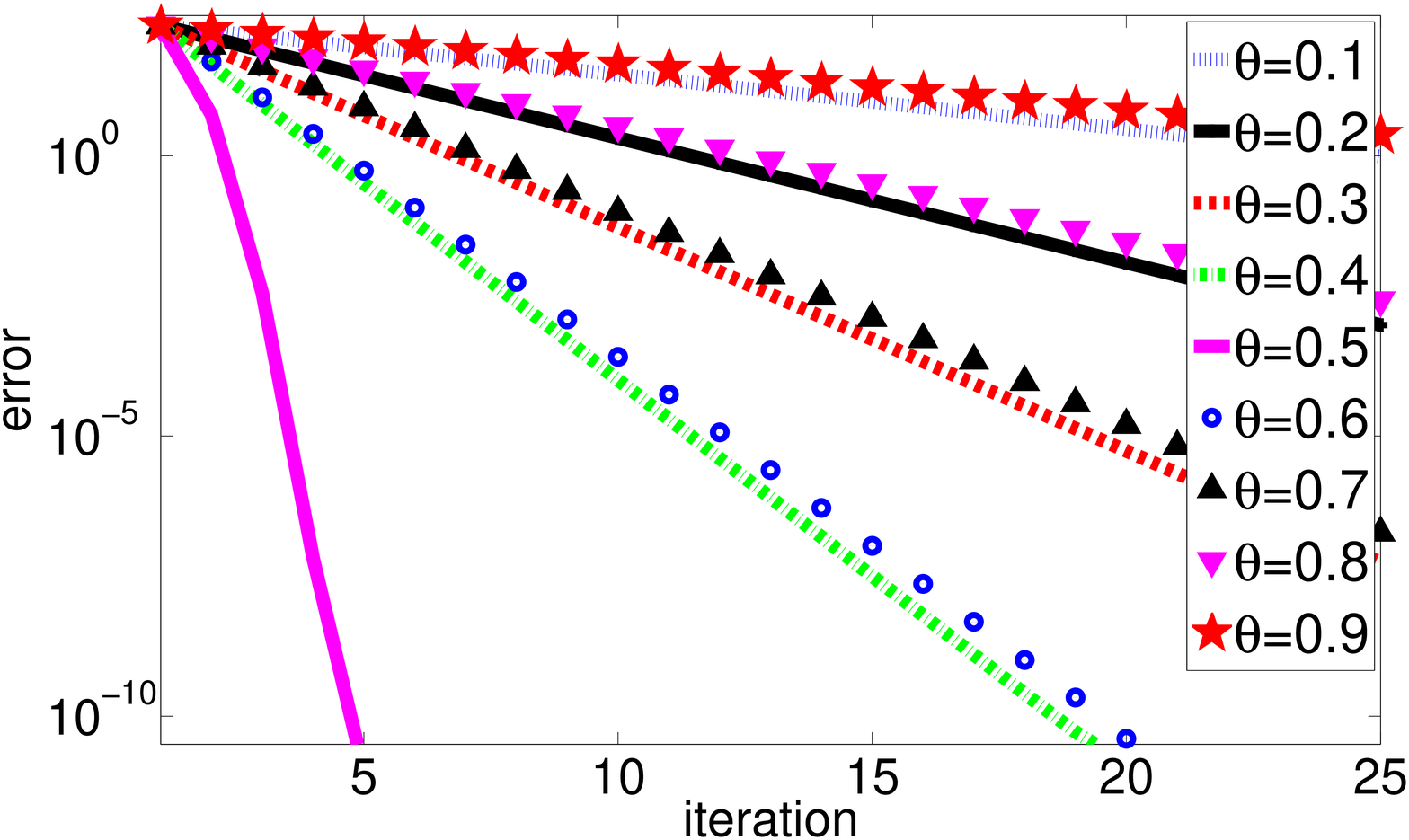}
  \includegraphics[width=0.49\textwidth]{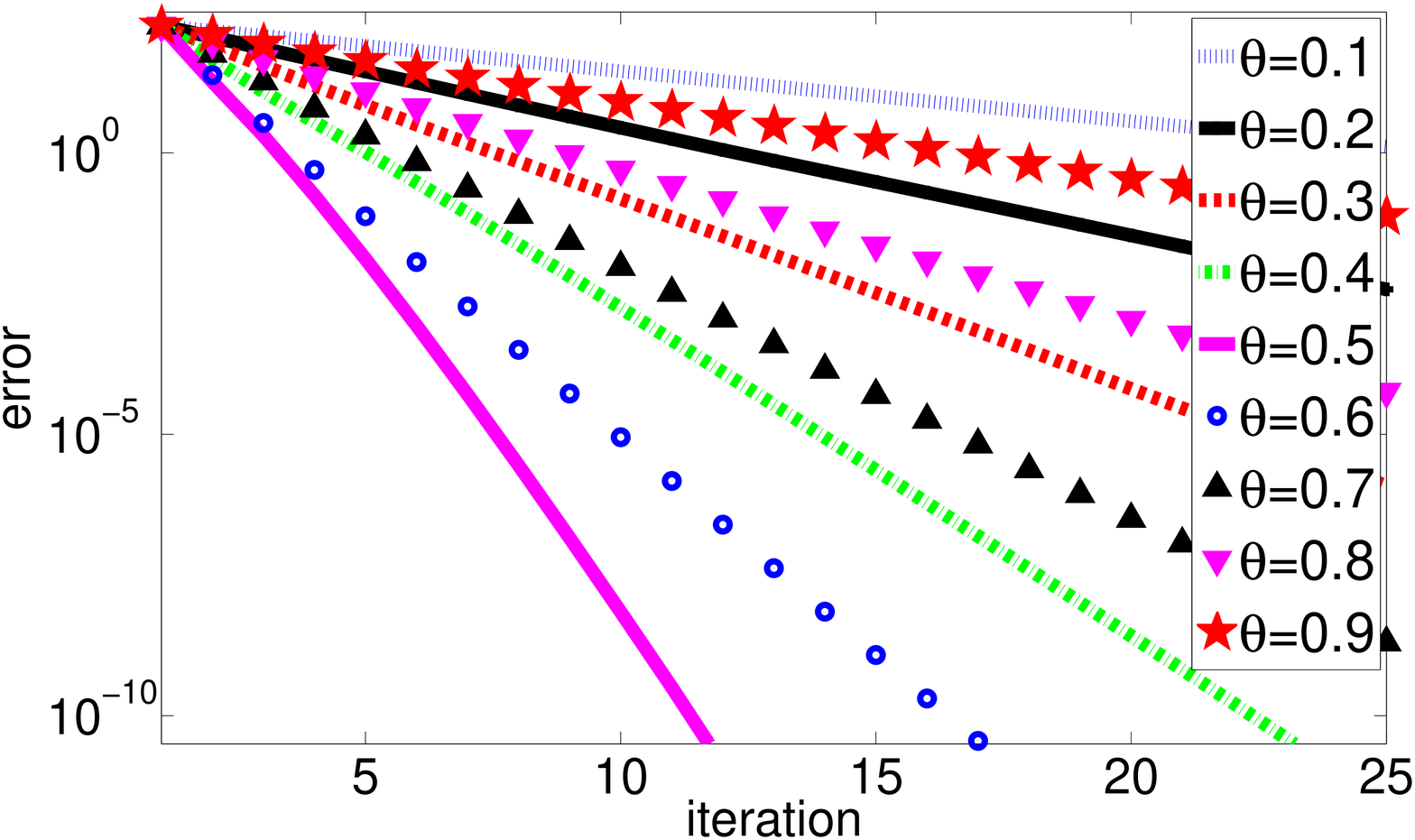}
  \caption{Convergence of DNWR for $a<b$ using various relaxation
    parameters $\theta$ for $T=2$, on the left for $\kappa(x)=1$ and on the right for
    $\kappa(x)=1+e^x$}
  \label{NumFig02}
\end{figure}
give the convergence curves for $T=2$ and for different values of the
parameter $\theta$ for $\kappa(x)=1$ on the left, and
$\kappa(x)=1+e^x$ on the right. We see that for a small time window,
we get linear convergence for all relaxation parameters $\theta$,
except for $\theta=1/2$, when we observe superlinear convergence. 


Next, we show an experiment for the NNWR algorithm in the spatial
domain $\Omega=(0,6)$, with the same discretization parameters $\Delta
x$ and $\Delta t$ as above, and for the time window $T=2$. Now onward
we always use $\kappa(x)=1$, unless otherwise specified. In Figure
\ref{NumFig2}, we consider a decomposition into two to six unequal
subdomains, whose widths are shown in Table \ref{Table1}. On the left
panel, we show the convergence in the four-subdomain case as a
function of the relaxation parameter $\theta$, whereas on the right
panel, we show the convergence for $\theta = 1/4$ as we vary the
number of subdomains. We observe superlinear convergence for
$\theta=1/4$, and only linear convergence for the other choices, and
also that convergence slows down as the number of subdomains is
increased, as expected.

\begin {table}
\begin{center}
\caption {Subdomain lengths used for the NNWR experiments in Fig.\ref{NumFig2}. \label{Table1}}
\begin{tabular}{|c|c|c|c|c|c|c|}
\hline 
No. of subdomains & $h_{1}$ & $h_{2}$ & $h_{3}$ & $h_{4}$ & $h_{5}$ & $h_{6}$\tabularnewline
\hline 
2 & 3.50 & 2.50 &  &  &  & \tabularnewline
\hline 
3 & 2.30 & 2.30 & 1.40 &  &  & \tabularnewline
\hline 
4 & 1.20 & 2.40 & 1.80 & 0.60 &  & \tabularnewline
\hline 
5 & 1.80 & 1.40 & 1.08 & 1.00 & 0.72 & \tabularnewline
\hline 
6 & 1.20 & 0.80 & 1.00 & 1.20 & 1.00 & 0.80\tabularnewline
\hline 
\end{tabular}
\end{center}
\end {table}

\begin{figure}
  \centering
  \includegraphics[width=0.49\textwidth]{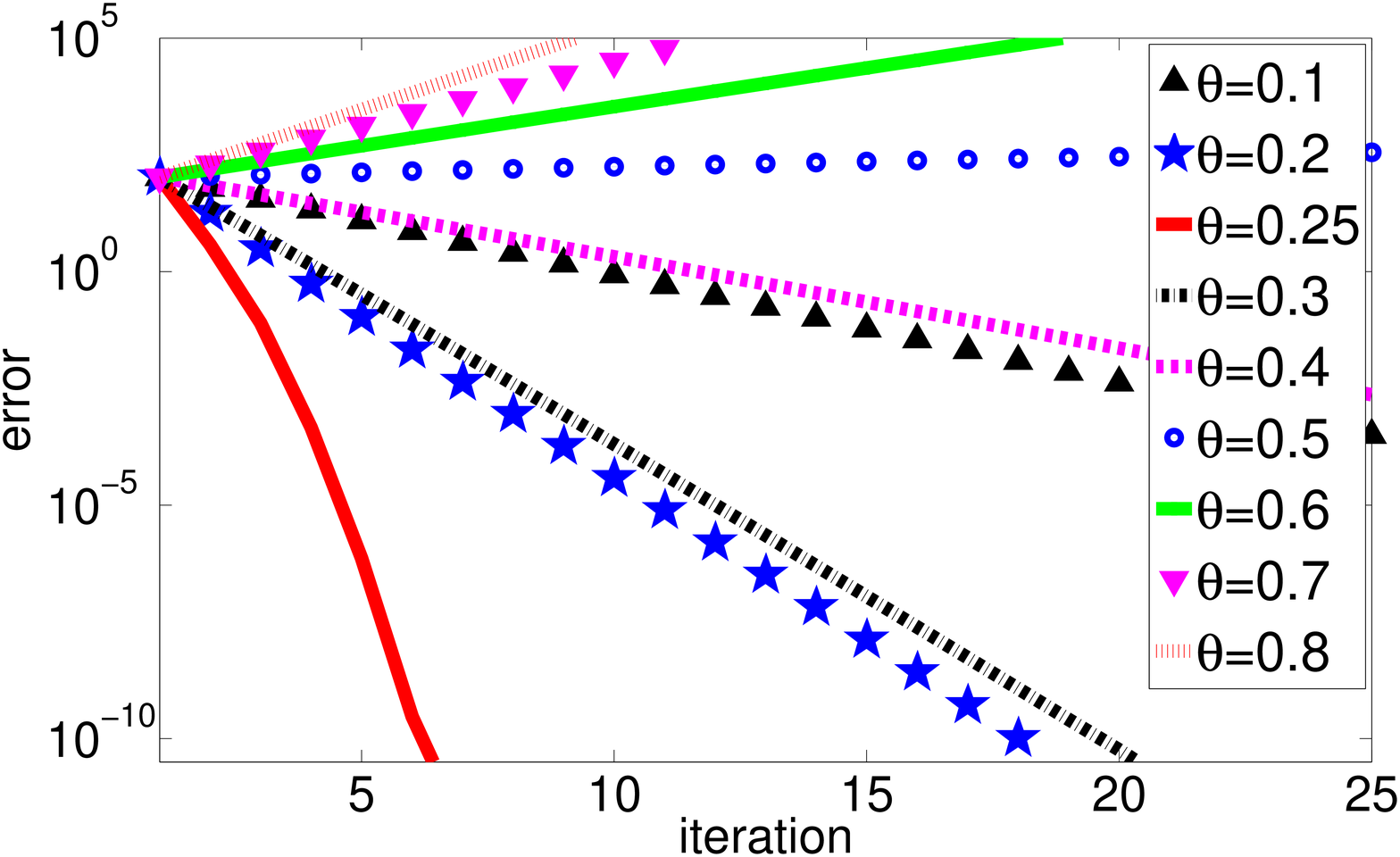}
  \includegraphics[width=0.49\textwidth]{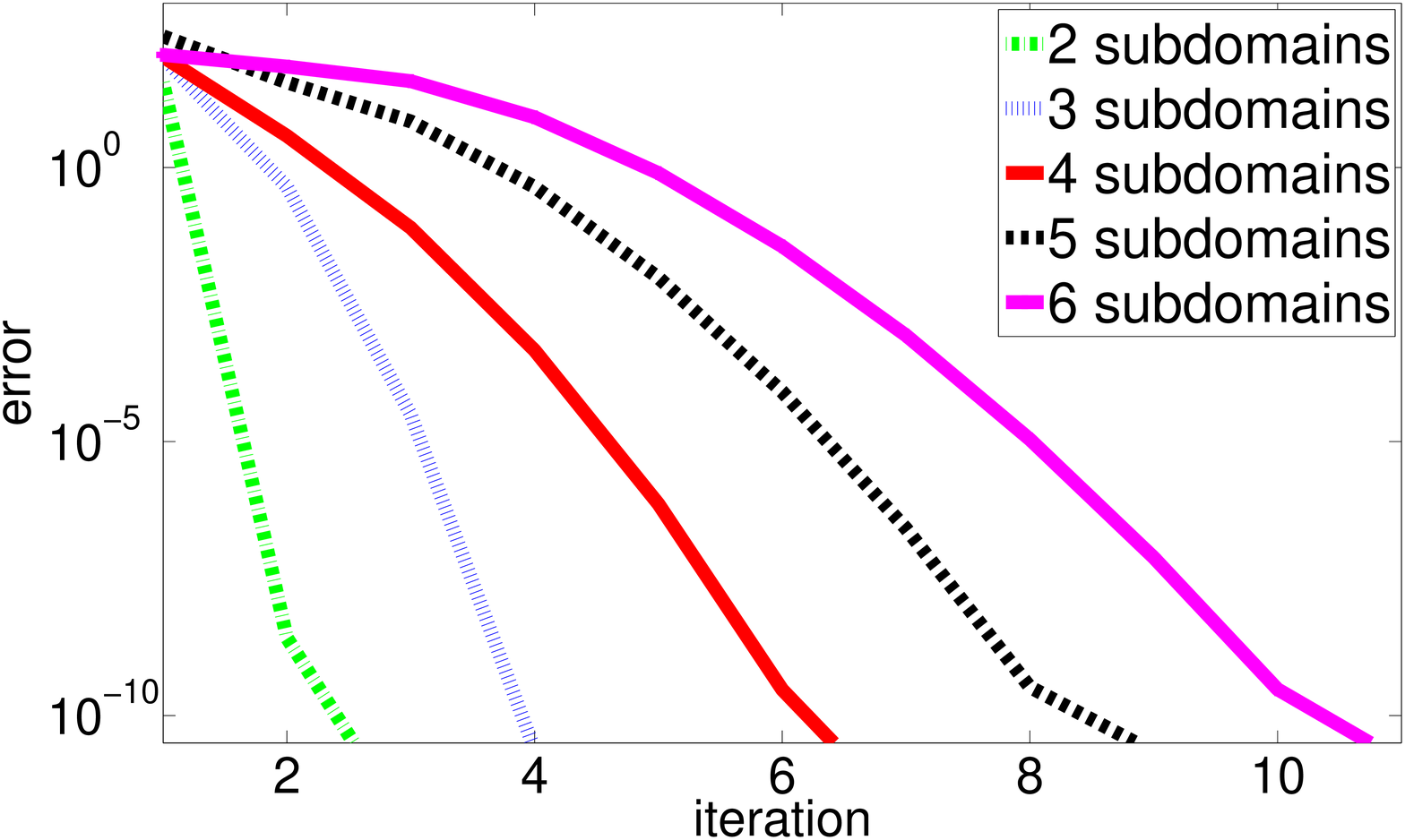}
  \caption{Convergence of NNWR with four subdomains and various
    relaxation parameters on the left, and dependence of NNWR on the
    number of subdomains for $\theta=1/4$ on the right}
  \label{NumFig2}
\end{figure}

\begin{figure}
  \centering
  \includegraphics[width=0.49\textwidth]{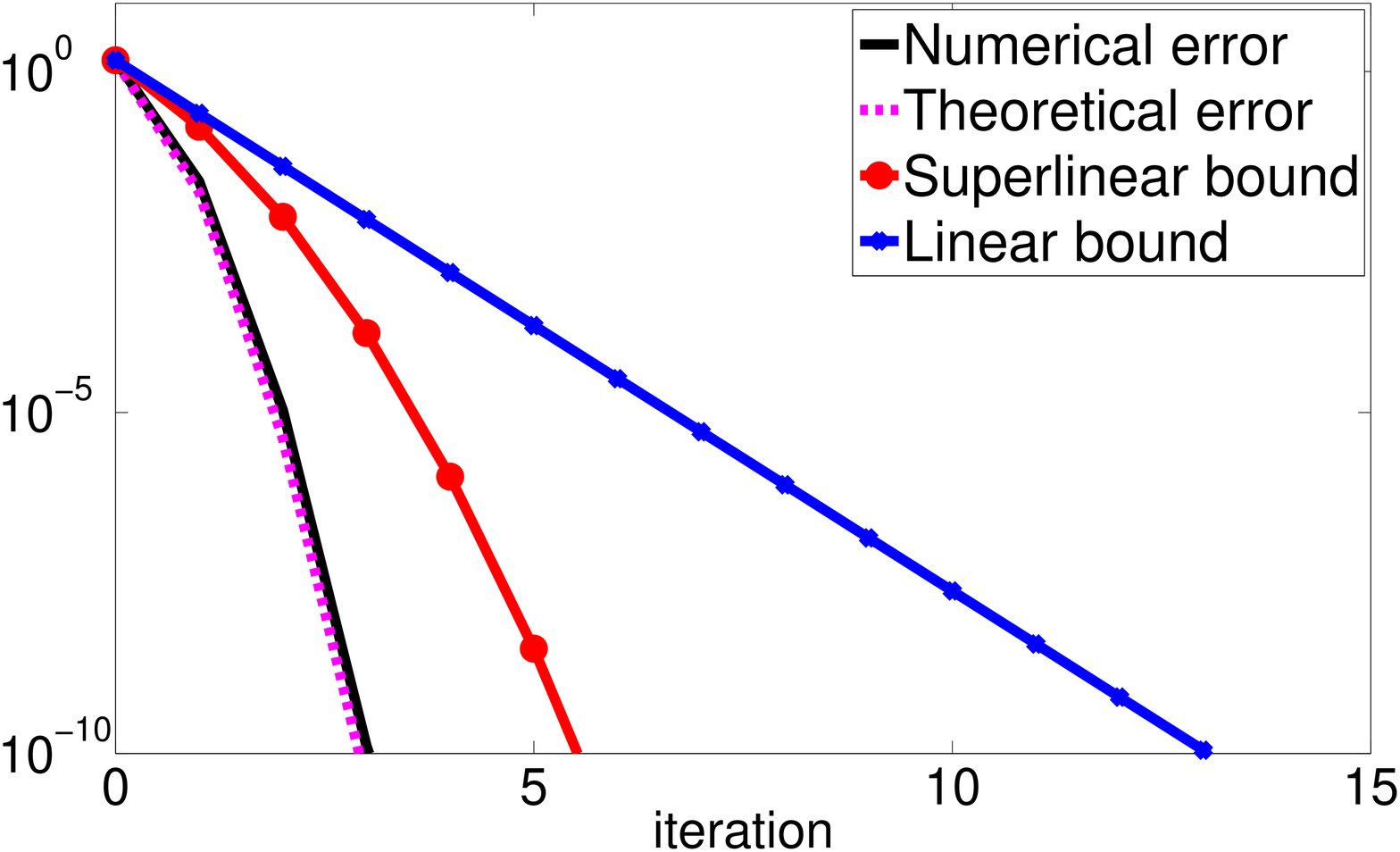}
  \includegraphics[width=0.49\textwidth]{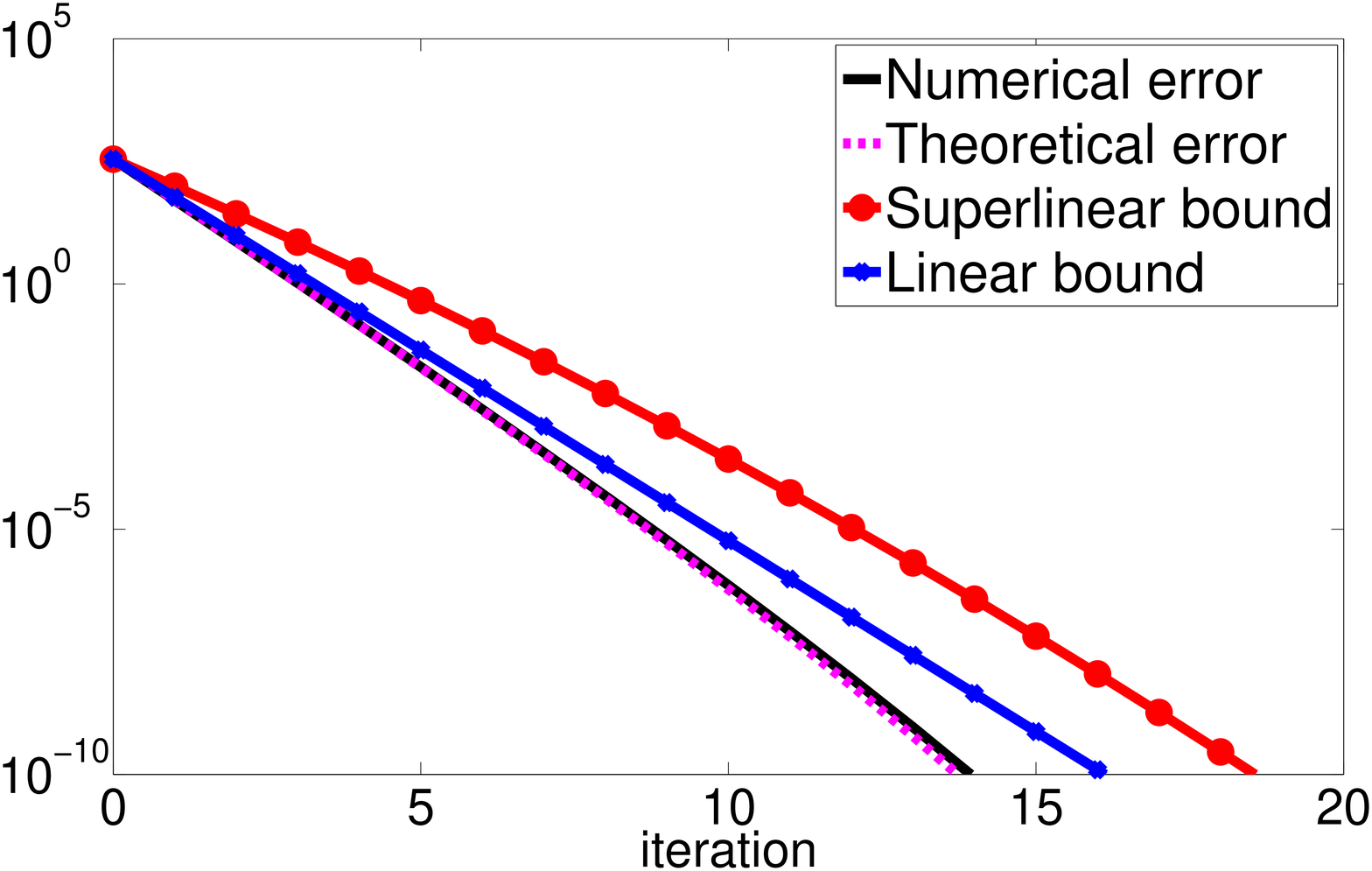}
  \caption{Comparison of the numerically measured convergence rates
    and the theoretical error estimates for DNWR for $\kappa(x)=1$ with $T=2$ on the
    left, and $T=50$ on the right}
  \label{NumFig3}
\end{figure}
We now compare the numerical behavior of DNWR and NNWR with our
theoretical estimates in Sections \ref{Section2} and \ref{Section3}.
In Figure \ref{NumFig3}, we show for the DNWR algorithm a comparison
between the numerically measured convergence for the discretized
problem, the theoretical convergence for the continuous model problem
(calculated using inverse Laplace transforms), and the linear and
superlinear convergence estimates shown in Theorem \ref{Theorem2}, for
$a=3$, $b=2$, $\kappa(x)=1$. We see that for a short time interval,
$T=2$, the algorithm converges superlinearly, and the superlinear
bound is quite accurate. For the long time interval $T=50$, the
algorithm converges linearly, and the linear convergence estimate is
now more accurate.
\begin{figure}
  \centering
  \includegraphics[width=0.49\textwidth]{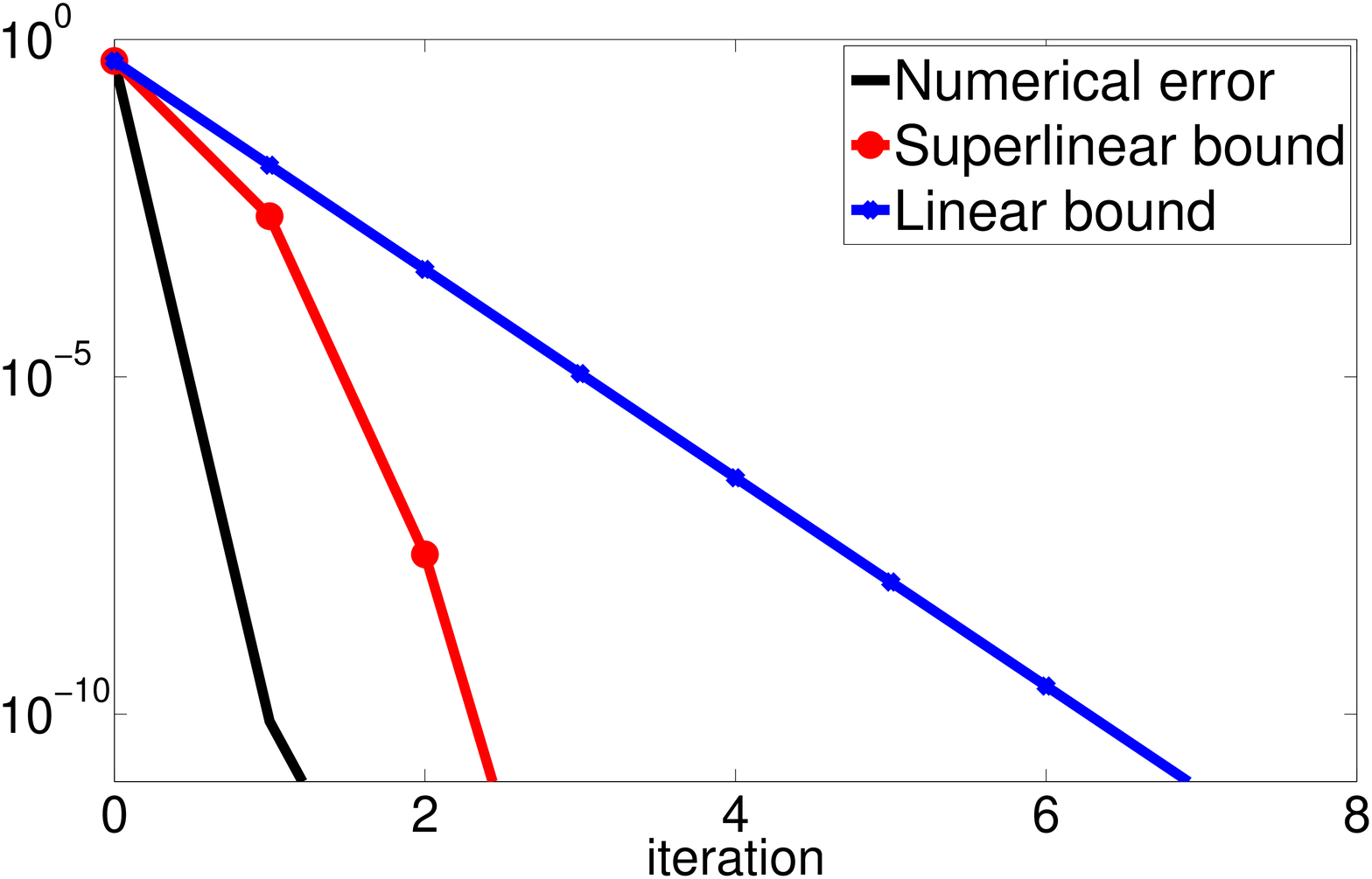}
  \includegraphics[width=0.49\textwidth]{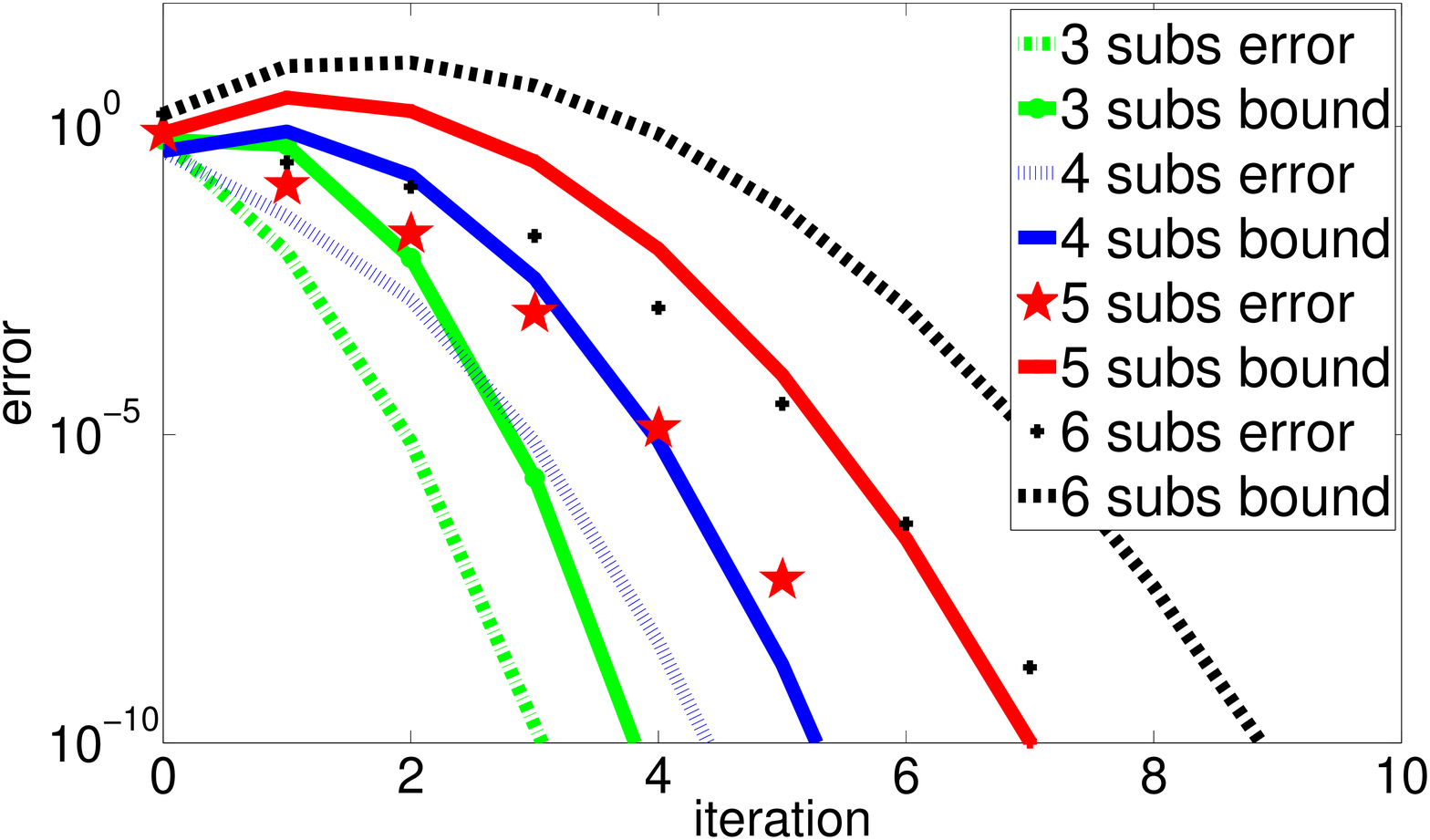}
  \caption{Comparison of the numerically measured convergence rates
    and the theoretical error estimates for NNWR for $\kappa(x)=1$ with
    $\theta=1/4$ and $T=2$, on the left for two subdomains, and on the
    right for many subdomains}
  \label{NumFig4}
\end{figure}
Similarly, we show in Figure \ref{NumFig4} a comparison of the
numerically measured convergence for the NNWR algorithm for
$\theta=1/4$ and $\kappa(x)=1$, and the theoretical estimates from Theorem
\ref{Theorem4}. On the left, we show the results for the two subdomain
case (subdomain lengths are as in the first line of Table \ref{Table1}), where we also plotted the linear estimate from \cite{Kwok}, and on
the right, we show the results for the case of many subdomains of equal length for $\Omega=(0,6)$.

We now compare in Figure \ref{NumFig5} 
\begin{figure}
  \centering
  \includegraphics[width=0.49\textwidth]{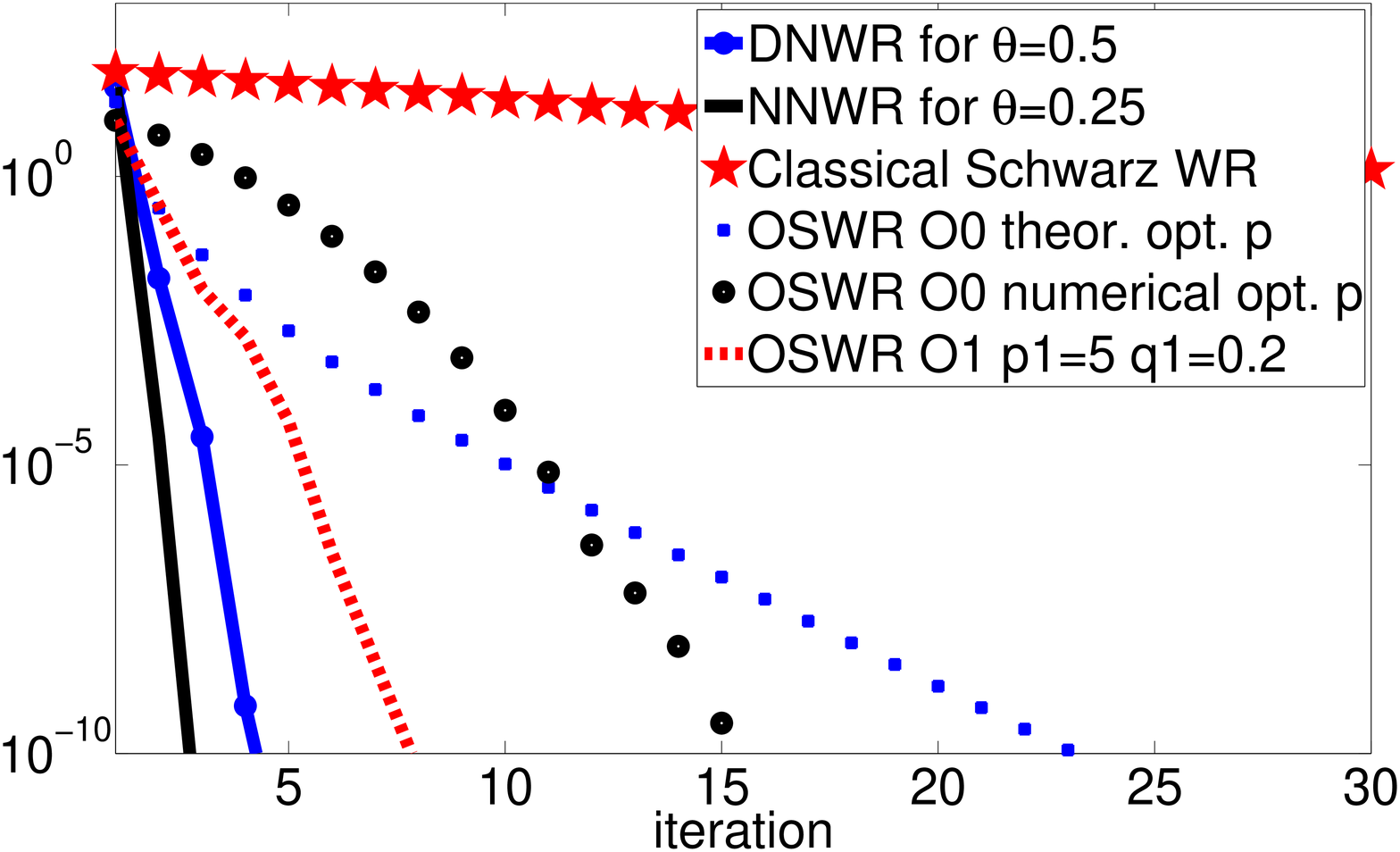}
  \caption{Comparison of DNWR and NNWR with Schwarz waveform
    relaxation}
  \label{NumFig5}
\end{figure}
the performance of the DNWR and NNWR algorithms for two subdomains
with the Schwarz Waveform Relaxation algorithms from \cite{GH1,BGH}
with overlap. We use an overlap of length $2\Delta x$, where $\Delta
x=1/50$. We observe that the DNWR and NNWR algorithms converge faster
than the overlapping Schwarz WR iteration. Only a higher order
optimized Schwarz waveform relaxation algorithm comes close to the
performance of the DNWR algorithm in this experiment.

We show an experiment for the NNWR algorithm in two dimension for the following model problem
$$ \partial_t u-\left(\partial_{xx}u+\partial_{yy}u\right)=0,u(x,y,0)=\sin(2\pi x)\sin(3\pi y).$$
We decompose our domain $\Omega:=(0,1)\times (0,\pi)$ into three non-overlapping subdomains
$\Omega_1=(0,2/5)\times (0,\pi)$, $\Omega_2=(2/5,3/4)\times (0,\pi)$,
$\Omega_3=(3/4,1)\times (0,\pi)$, see Figure \ref{NumFig6} on the
left. On the right, we plot the numerical errors of the NNWR algorithm for various $\theta$ and the theoretical estimates from Theorem \ref{2DTheorem} for $\theta= 1/4$ and again observe superlinear convergence.

\begin{figure}
  \includegraphics[width=0.47\textwidth]{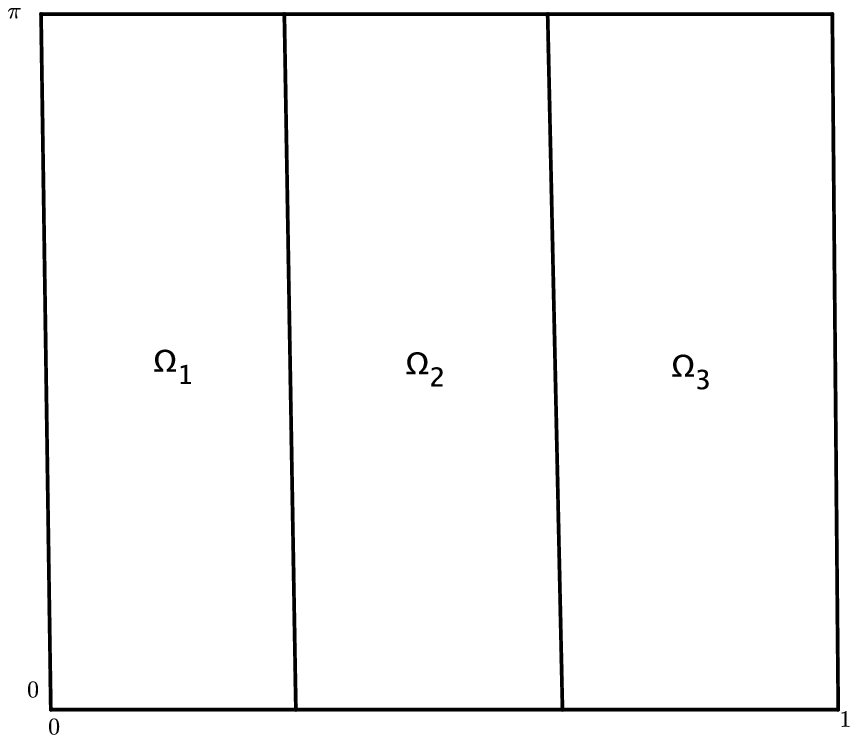}
  \includegraphics[width=0.5\textwidth]{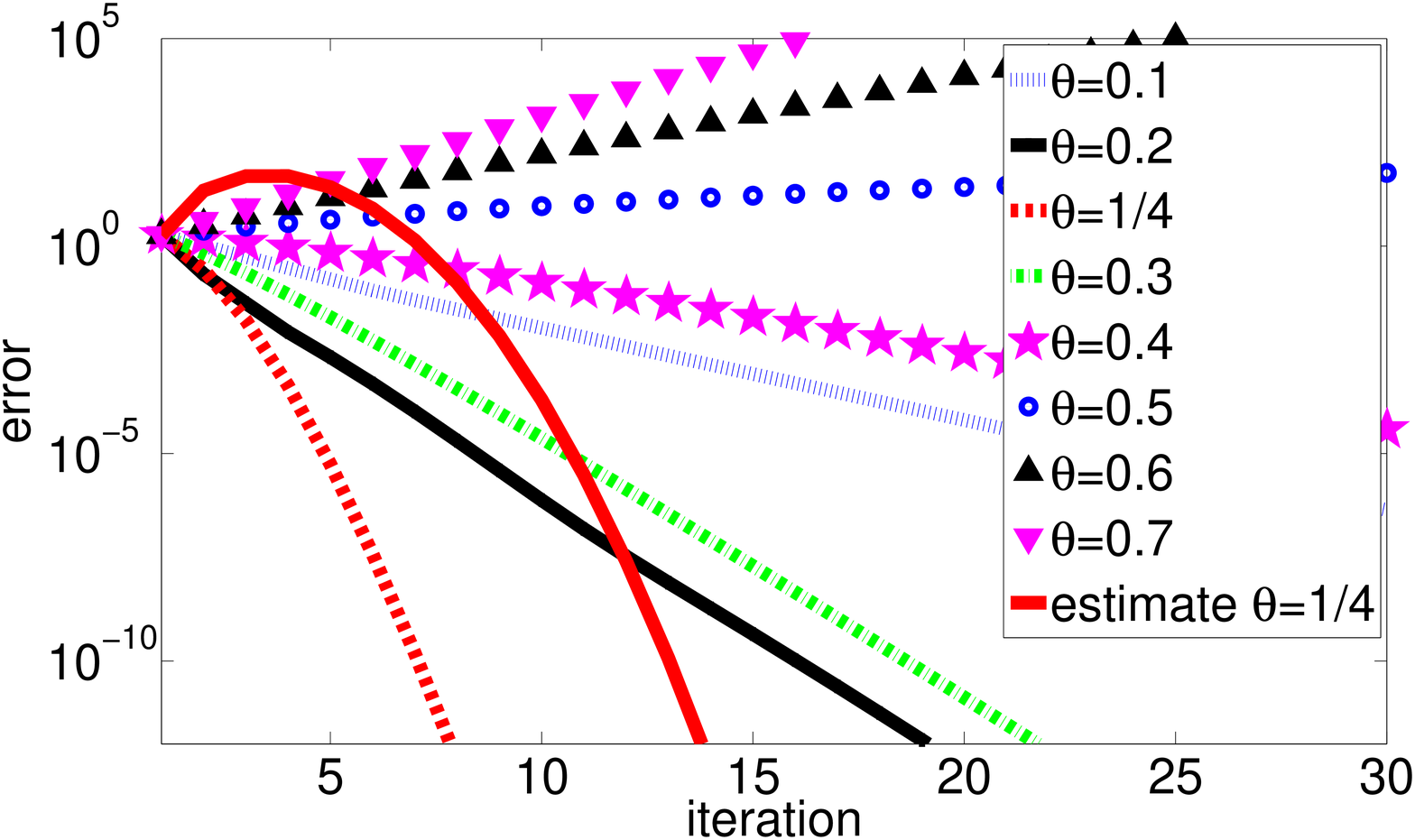}
  \caption{Decomposition of 2d domain into strips on the left, and convergence of NNWR using various relaxation parameters $\theta$ for $T=0.2$ on the right}
  \label{NumFig6}
  \end{figure}

We conclude this section with a numerical experiment not covered by
our analysis: we decompose the two dimensional domain
$\Omega:=(0,1)\times (0,1)$ into four non-overlapping subdomains
$\Omega_1=(0,1/2)\times (0,1/2)$, $\Omega_2=(0,1/2)\times (1/2,1)$,
$\Omega_3=(1/2,1)\times (1/2,1)$, $\Omega_4=(1/2,1)\times (0,1/2)$,
such that a cross point is present, see Figure \ref{NumFig7} on the
left. On the right, we show that the convergence of the NNWR algorithm
is again superlinear.

\begin{figure}
  \includegraphics[width=0.47\textwidth]{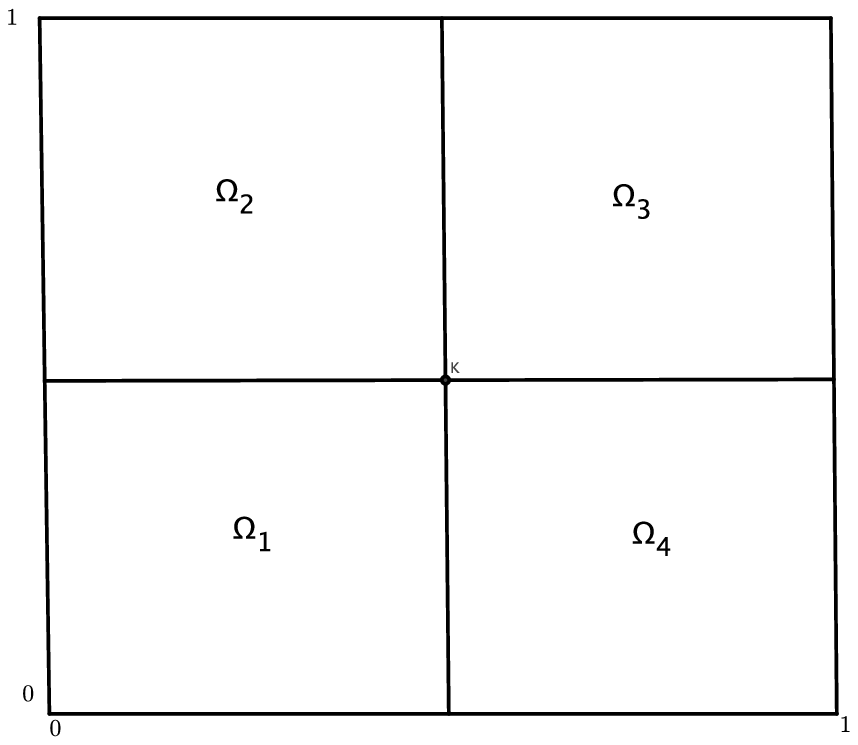}
  \includegraphics[width=0.5\textwidth]{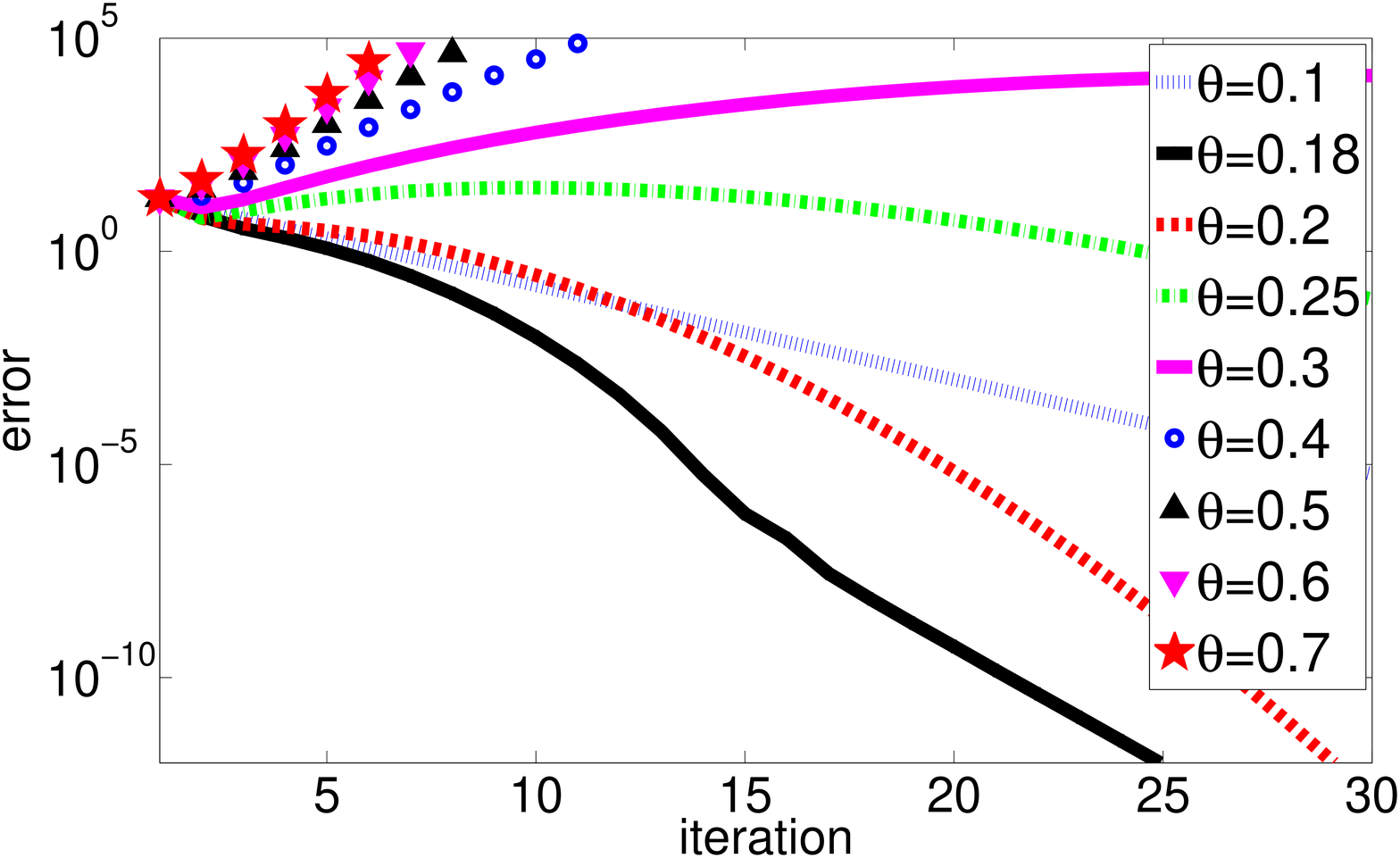}
  \caption{Decomposition of 2d domain with a crosspoint on the left, and convergence of NNWR using various relaxation parameters $\theta$ for $T=0.2$ on the right}
  \label{NumFig7}
  \end{figure}

\section{Conclusions}

We introduced two new classes of space-time parallel algorithms, the
Dirichlet-Neumann waveform relaxation (DNWR) and the Neumann-Neumann
waveform relaxation (NNWR) algorithms. For  the
one-dimensional heat equation, we proved superlinear convergence for
both algorithms for a particular choice of the relaxation
parameter. For the NNWR, our convergence estimate holds for a
decomposition into many subdomains, and we also gave an extension to
two spatial dimensions. We are currently working on the generalization
of our analysis for the DNWR algorithm to the case of many subdomains
and higher spatial dimensions, and we are also studying the
convergence for $\theta\ne 1/2$ in more detail.

\bibliographystyle{siam}
\bibliography{paperfinal} 
\nocite{*}

\end{document}